\newtheorem{theorem}{Theorem}[section]
\newtheorem{lemma}[theorem]{Lemma}
\newtheorem{corollary}[theorem]{Corollary}
\theoremstyle{definition}
\newtheorem{definition}[theorem]{Definition}
\newtheorem{remark}[theorem]{Remark}
\newtheorem{example}[theorem]{Example}
\numberwithin{equation}{section}
\newcommand{\sRips}{\mathrm{sRips}}
\newcommand{\Rips}{\mathrm{Rips}}
\newcommand{\Diam}{\mathrm{Diam}}
\newcommand{\Nerve}{\mathrm{Nerve}}
\newcommand{\NN}{\mathbb{N}}
\newcommand{\RR}{\mathbb{R}}
\DeclareMathOperator*{\arginf}{arg\,inf}
\newcommand\K{7}
\author{Bo\v stjan Leme\v z}
\address{{Faculty of Mathematics and Physics, University of Ljubljana, Jadranska 21, SI-1000 Ljubljana, Slovenia}}
\email{bostjan.lemez@fmf.uni-lj.si}
\author{\v Ziga Virk}
\address{Institute IMFM, Jadranska 19, SI-1000 Ljubljana; and Faculty of Computer and Information Science, University of Ljubljana, Ve\v cna pot 113, SI-1000 Ljubljana, Slovenia}
\email{ziga.virk@fri.uni-lj.si}
\thanks{Research was supported by Slovenian Research Agency grants No. N1-0114, J1-4001, J1-4031, and P1-0292.}
\keywords{Computational topology, Homotopy reconstruction, Homotopy equivalence, Rips complexes, Riemannian manifolds, Nerve theorem}
\subjclass[2020]{53C22, 55N35, 55Q05, 55U10, 57N65}
\begin{document}

\title[Finite reconstruction with selective Rips complexes]{Finite reconstruction with selective Rips complexes}

\begin{abstract}
Selective Rips complexes corresponding to a sequence of parameters are a generalization of Rips complexes utilizing the idea of thin simplices. In this paper we prove they can be used to reconstruct the homotopy type of a closed Riemannian manifold $X$ using a finite sample $Y$ of $X$.
In particular, for any sequence of parameters with positive limit  and any closed Riemannian manifold $X$ we prove the following: there exists a proximity parameter $\delta$, such that for each metric space $Y$ that is at Gromov-Hausdorff distance less than $\delta$ to $X$, the selective Rips complex of $Y$ attains the homotopy type of $X$. 
This result is a generalization of Latchev's reconstruction result from Rips complexes to selective Rips complexes. When restricted to Rips complexes, our approach yields a novel proof for the Latschev's theorem. We also present a functorial setting, which is new even in the case of Rips complexes. The latter provides an interval of constant persistent homology, where homology is isomorphic to that of $X$. 
\end{abstract}
\maketitle

\section{Introduction}

Rips complexes are one of the most widespread combinatorial constructions arising from metric spaces. Originally appearing in \cite{Viet}, they have since been used in geometric group theory \cite{Gromov}, coarse geometry \cite{Comb}, and, above all, in computational topology \cite{EH} in the context of persistent homology. Their combinatorial simplicity results in significant algorithmic advantages \cite{bauer2019ripser}. As a result they are perhaps the most popular choice of complexes (and filtrations) in topological data analysis. However, understanding how they encode the geometry of the underlying metric space $X$ is challenging. This task is typically treated in the following questions:
\begin{enumerate}
 \item Reconstruction result: Does the Rips complex at small scale attain the homotopy type of $X$?
  \item Finite reconstruction result: Does the Rips complex of a finite set similar to $X$ attain the homotopy type of $X$ at appropriate scales?
  \item How does homology of Rips complexes encode other geometric information about $X$?
\end{enumerate}

The reconstruction result for Rips complexes was first proved for closed Riemannian manifolds  (i.e. compact Riemannian manifolds without boundary) by Hausmann \cite{Hausmann}. The first finite reconstruction result for the same class of spaces was then proved by Latschev \cite{Latschev} using the Gromov-Haus\-dorff metric as a measure of proximity. Further reconstruction results, have later been obtained for Rips complexes \cite{Att}, Čech complexes \cite{Chazal, W}, Delauney complexes \cite{Dey} and witness complexes \cite{Guibas}. In many of these cases the proof used the nerve theorem. Question (3) turns out to be quite challenging. Its various aspects have recently been studied in \cite{AA, Ad5, ACos, Feng, Memoli, Saleh, Shukla, Virk1-dim, VirkSRips1, ZV2, ZVCounterex}. In \cite{ZV2} the author proved that many simple closed geodesics in a geodesic metric space can be detected using persistent homology in dimensions one, two and three. In order to significantly expand the collection of simple closed geodesics detectable by persistent homology, the author introduced selective Rips complexes in \cite{VirkSRips1}. The underlying geometric idea is to control the ``thickness'' of simplices by restricting the Rips complex to thin simplices. The restriction is controlled through various parameters, see Definition \ref{sRips} for details. Further work on selective Rips complexes includes the reconstruction result \cite{LemezVirk}, the ability to detect all locally isolated minima of the distance function on a metric space \cite{Gor}, and the ability to detect many simple closed geodesics in hyperbolic surfaces \cite{Jelenc}. These relate to questions (1) and (3) above. The \textbf{main result} of this paper is a finite reconstruction result for selective Rips complexes in Theorem \ref{Latschev}, i.e., a positive answer to question (2) in the grand scheme of the above three questions. When restricted to Rips complexes, our approach provides a novel proof of the initial result of Latschev. An additional benefit of our approach is that the homotopy equivalence is functorial, i.e., that there exists a region of parameters at which the natural inclusions of selective Rips complexes are homotopy equivalences. This is a new result even in the case of Rips complexes and provides additional information on persistent homology. Functoriality of reconstruction result (1) for Rips complexes was proved in \cite{VirkRipsAsNerves} and used in \cite{ZVCounterex}.

The \textbf{main idea of the proof} is to construct isomorphic good covers $\mathcal{C}$ of $X$ and $\mathcal{W}$ of a selective Rips complex of $Y$ (where $Y$ is an approximation of $X$), and then use a functorial nerve theorem of \cite{VirkRipsAsNerves} to obtain our main result. The structure of the paper is the following.  Preliminaries on selective Rips complexes are given in Section 2. Geometric lemmas adapting some of the technical steps of \cite{Latschev} are provided in Section 3. These essentially describe geometric conditions under which the selective Rips complexes of finite approximations of discs or their star-shaped subsets are contractible. In Section 4 we carefully construct two covers. Section 5 carries out the main argument as described above.

%%%%% 
Selective Rips complexes have a potential to act as a finer and easy computable version of Rips complexes. We expect them encode more geometric information and allow us to control the level of details extracted by persistent homology. It is reasonable to expect such control to be beneficial in theoretical and practical applications.

\section{Preliminaries} 

We start by defining the involved construction of simplicial complexes. 
Given a metric space $(X,d)$ and a scale $r>0$, the \textbf{Rips} complex (sometimes also referred to as Vietoris-Rips complex) $\Rips(X,r)$ is an abstract simplicial complex with vertex set $X$, defined by the following rule: a finite $\sigma\subseteq X$ is a simplex if $\Diam(\sigma)<r$. In this paper we focus on selective Rips complexes. They were first introduced in \cite{VirkSRips1}. The following more general definition first appears in \cite{LemezVirk}.

\begin{definition}\label{sRips}
Let $(X,d)$ be a metric space and let $r_1\geq r_2\geq \ldots$ be positive scales forming a sequence $\tilde r=(r_1,r_2, \ldots)$.  \textbf{Selective Rips complex} $\sRips(X; r_1,r_2,\ldots)=\sRips(X;\tilde r)$ is an abstract simplicial complex defined by the following rule: a finite subset $\sigma\subseteq X$ is a simplex if for each positive integer $i$, the set $\sigma$ can be expressed as a union of $i$-many sets of diameter less than $r_i$.
\end{definition}

\begin{figure}[ht!]
	 	\begin{center}
	    \includegraphics[width=23em]{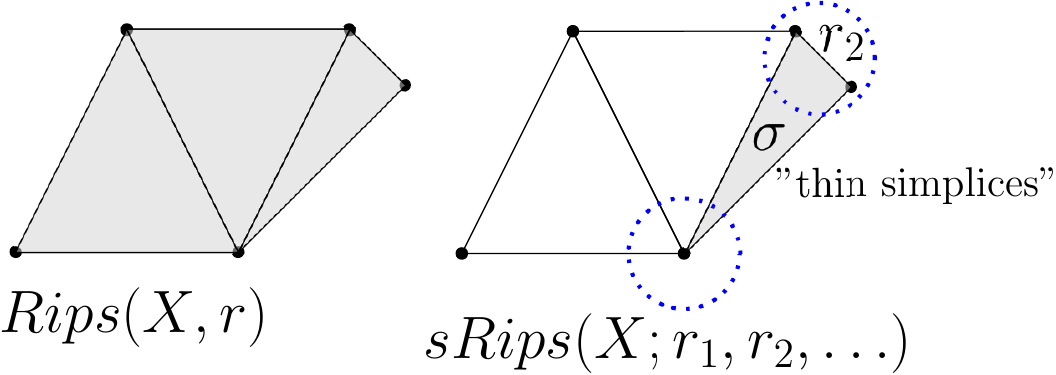}
	    	\caption{The planar simplicial complex on the left is the Rips complex of a five point set $X$ at scale $r$, where $r>0$ is slightly more than the length of the included edges, and less than the distance between the points not connected by an edge. On the right we see a selective Rips complex with $r_1=r$ and $r_2$ being slightly larger than the length of the short edge indicated on the figure. We see it contains  only the ``$r_2$-thin'' triangle amongst all the triangles of the Rips complex on the left. As $r_1=r$, the edges are the same as the edges of $\Rips(X,r)$.}
	    \label{figsRips}
	  \end{center}
		\end{figure}

Note that $\sRips(X; r_1,r_2,\ldots)$ is a subcomplex of $\Rips(X,r_1)$ and that \\  $\sRips(X; r,r,r,\ldots)=\Rips(X,r)$. By increasing the scales $r_i$ in any way that respects the above monotonicity of $r_i$ in index $i$, we form a nested sequence of simplicial complexes called a filtration. Scale $r_i$ denotes the ``width'' of simplices in dimension $i$ in the following sense: for $n > i$ the vertices $v_1, v_2, \ldots, v_n $ of an $(n-1)$-simplex  $\sigma$ in $\sRips(X; r_1,r_2,\ldots)$ can be partitioned (clustered) in $i$ sets of diameter less than $r_i$. These clusters can be thought of as vertices of an $(i-1)$ simplex ``thickened'' by $r_i$. For an example see Figure \ref{figsRips}.
We will also use the following notation. For $\tilde{r}=(r_1,r_2,...)$, where $r_1\geq r_2\geq \ldots>0$, let $\displaystyle{r_\infty=\lim_{n\to \infty}r_n}$. Moreover, we write $\alpha>\tilde{r}$ if $\alpha>r_1$ and $\tilde{r}>\beta$ if $ r_\infty>\beta$.

We now define combinatorial and topological prerequisites for the use of a nerve theorem.
Simplicial maps $f,g: K_1 \to K_2$ between simplicial complexes are \textbf{contiguous} if for each $\sigma\in K_1$, $f(\sigma)\cup g(\sigma)$ is  a simplex in $K_2$. Contiguous maps are homotopic (see \cite[p. 130]{Spanier}).
A cover $\mathcal{U}$ of a metric space is \textbf{good} if each finite intersection of elements from $\mathcal{U}$ is either empty or contractible. The \textbf{nerve} of $\mathcal{U}$ is the simplicial complex $\Nerve(\mathcal{U})$ defined by the following declarations:
\begin{itemize}
 \item Vertices are elements of $\mathcal{U}$.
 \item $\sigma$ is a simplex if and only if $\displaystyle\bigcap_{U\in \sigma} U \neq \emptyset$.
\end{itemize}

We will be using the Functorial Nerve Theorem as presented in \cite{VirkRipsAsNerves}. For details and further discussion see \cite{VirkRipsAsNerves}.

\begin{theorem}[Functorial nerve theorem, an adaptation of Lemma 5.1 of \cite{VirkRipsAsNerves}]
\label{ThmNerve}
 Suppose $\mathcal{U}$ is a good open cover of a metric space $X$. Then $X \simeq \Nerve(\mathcal{U})$. 
 
 If $\mathcal{V}$ is another good open cover of $Y \subset X$ subordinated to $\mathcal{U}$ (i.e., if $\forall V \in \mathcal{V} \ \exists U_V\in \mathcal{U}: V \subseteq U_V$), then the diagram 
 $$
 \xymatrix{
 X\ar[r]^{\simeq \qquad} & \Nerve(\mathcal{U})\\
 Y  \ar@{^(->}[u] \ar[r]^{\simeq \qquad}& \Nerve(\mathcal{V}) \ar[u]
 }
 $$
 commutes up to homotopy, with $\Nerve(\mathcal{V})\to \Nerve(\mathcal{U})$ being the simplicial map mapping $V \mapsto U_V$ and the horizontal homotopy equivalences arising from partitions of unity corresponding to the involved covers.
\end{theorem}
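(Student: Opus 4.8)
The plan is to realize both horizontal arrows as nerve maps induced by partitions of unity and then reduce the functorial statement to a soft combinatorial observation about common membership. Since $X$ is a metric space it is paracompact, so there is a locally finite partition of unity $\{\phi_U\}_{U\in\mathcal{U}}$ subordinate to $\mathcal{U}$ with $\mathrm{supp}(\phi_U)\subseteq U$. First I would define the nerve map $p_{\mathcal{U}}\colon X\to |\Nerve(\mathcal{U})|$ by $p_{\mathcal{U}}(x)=\sum_{U}\phi_U(x)\,[U]$, where $[U]$ denotes the vertex of $\Nerve(\mathcal{U})$ corresponding to $U$. This is well defined and continuous: local finiteness makes the sum locally finite, and for each $x$ the vertices $\{U : \phi_U(x)>0\}$ all contain $x$, so their intersection is nonempty and they span a simplex of $\Nerve(\mathcal{U})$, over which $p_{\mathcal{U}}(x)$ is the indicated convex combination.

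The first (non-functorial) assertion $X\simeq\Nerve(\mathcal{U})$ is precisely the classical nerve theorem for good open covers, and I expect this to be the main obstacle. I would prove it by showing that $p_{\mathcal{U}}$ is a homotopy equivalence. The cleanest route is via the homotopy colimit of the diagram of finite nonempty intersections of members of $\mathcal{U}$: the canonical map from this homotopy colimit to $X$ is a homotopy equivalence because the cover is open (a standard projection/blow-up argument, or an induction over the skeleta of the nerve via Mayer--Vietoris), while the goodness hypothesis forces every intersection to be contractible, so the homotopy colimit collapses onto $|\Nerve(\mathcal{U})|$. Alternatively one argues directly that $p_{\mathcal{U}}$ is a weak equivalence by skeletal induction and then upgrades it using that both spaces carry CW homotopy type. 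In the present setting this step may simply be invoked from \cite{VirkRipsAsNerves}.

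For the functorial part I would equip $Y$ with a partition of unity $\{\psi_V\}_{V\in\mathcal{V}}$ subordinate to $\mathcal{V}$, define $p_{\mathcal{V}}$ analogously, and let $f\colon\Nerve(\mathcal{V})\to\Nerve(\mathcal{U})$ be the simplicial map $V\mapsto U_V$; this is simplicial because $\bigcap_i V_i\neq\emptyset$ together with $V_i\subseteq U_{V_i}$ gives $\bigcap_i U_{V_i}\neq\emptyset$. It then remains to show $p_{\mathcal{U}}|_Y\simeq f\circ p_{\mathcal{V}}$. The key observation is that, for a fixed $y\in Y$, every vertex occurring in $p_{\mathcal{U}}(y)$ (each $U$ with $\phi_U(y)>0$) contains $y$, and every vertex occurring in $f(p_{\mathcal{V}}(y))$ (each $U_V$ with $\psi_V(y)>0$) also contains $y$, since $y\in V\subseteq U_V$. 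Hence the union of all of these vertices has $y$ in its intersection and therefore spans a single simplex of $\Nerve(\mathcal{U})$. Consequently $p_{\mathcal{U}}(y)$ and $f(p_{\mathcal{V}}(y))$ always lie in a common simplex, so the straight-line homotopy $H(y,t)=(1-t)\,p_{\mathcal{U}}(y)+t\,f(p_{\mathcal{V}}(y))$ stays inside $|\Nerve(\mathcal{U})|$, and local finiteness again yields continuity. This produces the required homotopy, and together with the two horizontal equivalences it shows the square commutes up to homotopy. The genuinely hard input is thus the non-functorial nerve theorem; once the nerve maps are in place, functoriality follows formally from the common-membership observation.
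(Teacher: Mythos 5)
Your proposal is correct, but there is essentially nothing in the paper to compare it against: the paper does not prove Theorem \ref{ThmNerve} at all. The statement is imported as an adaptation of Lemma 5.1 of \cite{VirkRipsAsNerves}, and the reader is referred to that reference for details. Your argument therefore supplies a proof where the paper offers only a citation. The functorial half of your argument is complete and correct: realizing both horizontal maps as partition-of-unity nerve maps, the key observation that every vertex with positive weight in $p_{\mathcal{U}}(y)$ contains $y$ (by subordination of the partition of unity) and every vertex with positive weight in $|f|(p_{\mathcal{V}}(y))$ contains $y$ (since $y \in V \subseteq U_V$) forces both points into a single simplex of $\Nerve(\mathcal{U})$, so the straight-line homotopy is well defined, and local finiteness of the partitions of unity gives its continuity. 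This contiguity-style, common-simplex mechanism is exactly the kind of argument the cited lemma rests on, so your route matches the imported result in spirit.

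Two caveats on the non-functorial half, which you correctly identify as the real content. First, your fallback of proving that $p_{\mathcal{U}}$ is a weak equivalence and then upgrading via CW homotopy type does not work for arbitrary metric spaces: a metric space need not have the homotopy type of a CW complex, so the upgrade is unjustified at this generality. The homotopy colimit route (the projection from the Mayer--Vietoris blowup to $X$ is a homotopy equivalence for numerable covers, and metric spaces are paracompact so every open cover is numerable) is the one that works, as is, of course, simply invoking \cite{VirkRipsAsNerves} as the paper does. Second, the theorem requires the equivalence $X \simeq \Nerve(\mathcal{U})$ to be the partition-of-unity map itself, whereas the hocolim argument a priori produces only a zigzag $X \leftarrow \mathrm{hocolim} \rightarrow |\Nerve(\mathcal{U})|$; to close the gap you must identify $p_{\mathcal{U}}$ with the composite of a section of the (shrinkable) projection onto $X$ followed by the collapse onto the nerve. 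This identification is standard but should be stated, since the functorial square is formulated for these specific maps.
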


We next define geometric properties of treated spaces.
Let $(X,d)$ be a metric space and $q>0$. The open $r$-neighborhood of $Z \subset X$ is  denoted by $N(Z,q) = \{y \in X \mid d(x,y)<q, \ \forall x \in Z \}$. The open $r$-ball of $x \in X$ is  denoted by $B(x,q) = \{y \in X \mid d(x,y)< q \}$. 
A path between points $x,y\in X$ is called a \textbf{geodesic} from $x$ to $y$, if its length is $d(x,y)$.

Let $(X,d)$ be a Riemannian manifold. Then there exists $r(X)\geq 0$ as the least upper bound of the set of real numbers $r$ satisfying the following (see \cite{Hausmann} for details): 
\begin{enumerate}
\item For all $x,y\in X$ such that $d(x,y)<2r$ there exists a unique geodesic joining $x$ to $y$ of length $2r$.
\item Let $x,y,z,u \in X$ with $d(x,y)<r$, $d(u,x)<r$, $d(u,y)<r$ and $z$ be a point on the shortest geodesic joining $x$ to $y$. \\Then $d(u,z)\leq \max\{d(u,x),d(u,y)\}$.
\item If $\gamma$ and $\gamma'$ are arc-length parametrized geodesic such that $\gamma(0)=\gamma'(0)$ and if $0\leq s$, $s'<r$ and $0\leq t<1$, then $d(\gamma(ts),\gamma'(ts'))\leq\\ d(\gamma(s),\gamma'(s'))$.
\end{enumerate}

A compact Riemannian manifold $X$ has $r(X)>0$. We will denote $r(X)$ by $\rho$, and we call it a \textbf{star radius}. We say that $A\subseteq X$ is a \textbf{star-shaped} with center at $x_0\in A$ if for each $x\in A$, all geodesic from $x$ to $x_0$ are contained in $A$.

It remains to formalize the proximity of metric spaces.
The \textbf{Hausdorff distance} between subsets $Y,Z\subset X$ of a metric space $X$ is defined as 
$$
d_{H}(Y,Z) = \arginf_{s>0} \left\{ Z \subseteq N(Y,s) \textrm{ and } Y \subseteq N(Z,s)  \right\}.
$$

The \textbf{Gromov-Hausdorff} distance between metric spaces $Y$ and $Z$ is defined as 
\begin{multline*}
d_{GH}(Y,Z) =\\ \inf_{U,i,j} \left\{ d_H(i(Y),j(Z)) \mid i\colon Y \hookrightarrow U \textrm{ and } j\colon Z \hookrightarrow U \textrm{ are isometric embeddings }  \right\}.
\end{multline*}

It is well known that these two distances are in fact metrics when restricted to compact metric subspaces or isometry classes of compact metric spaces respectively. 
%In our context though we will make no such restriction. 
Observe that for $Y,Z\subset X$ we have $d_H(Y,Z)\leq d_{GH}(Y,Z)$.

The main result of the paper is a finite reconstruction result for selective Rips complexes, i.e., Theorem \ref{Latschev}. 

\begin{theorem}\label{Latschev}
Let $X$ be a closed Riemannian manifold. Then there exists a positive number $\varepsilon_0$, such that for any $M\in \{2,3,\ldots\}$ and any sequence $\tilde{r}$ with $\varepsilon_0 > \tilde r> \varepsilon_0 /M$, there exists $\delta>0$ such that for every metric space $Y$ with  $d_{GH}(X,Y) < \delta$ we have $\sRips\left (Y; \tilde{r}\right )\simeq X$.

If $\tilde{s}=(s_1,s_2,...)$ is another sequence of scales with $\varepsilon_0 > \tilde s> \varepsilon_0 /M$ and $s_i \leq r_i, \forall i$, then the natural inclusion $\sRips\left (Y; \tilde{s}\right ) \to \sRips\left (Y; \tilde{r}\right )$ is a homotopy equivalence.
\end{theorem}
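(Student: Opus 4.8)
The plan is to realize both $X$ and the selective Rips complex as nerves of combinatorially isomorphic good covers, and then let the functorial nerve theorem (Theorem~\ref{ThmNerve}) do the work. First I would fix $\varepsilon_0$ to be a sufficiently small fraction of the star radius $\rho = r(X)$: small enough that every metric ball of radius below $\varepsilon_0$ is geodesically convex (this is exactly property (2) of $\rho$), and small enough that the contractibility lemmas of Section~3 apply to finite samples of such balls and their intersections at every scale in the admissible window $(\varepsilon_0/M,\varepsilon_0)$. Choosing a finite $\varepsilon_0$-dense set of centers $\{x_1,\dots,x_k\}\subset X$ and a radius $q$ comparable to $\varepsilon_0$, I would let $\mathcal{C}=\{C_i\}$ with $C_i = B(x_i,q)$. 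Since each $C_i$ is convex and a finite intersection of convex balls is again convex (hence contractible) or empty, $\mathcal{C}$ is a good open cover, so Theorem~\ref{ThmNerve} gives $X \simeq \Nerve(\mathcal{C})$.

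Next I would build the matching cover $\mathcal{W}=\{W_i\}$ of $\sRips(Y;\tilde r)$. For each center $x_i$ I set $W_i$ to be an open thickening, in the geometric realization, of the full subcomplex of $\sRips(Y;\tilde r)$ spanned by the vertices of $Y$ lying in a controlled neighborhood of $C_i$; the thickenings form an open cover whose intersection pattern agrees with that of the underlying subcomplexes. The contractibility of each $W_i$, and more importantly of each nonempty finite intersection $\bigcap_{i\in\sigma} W_i$, is precisely what the Section~3 lemmas deliver: these intersections are selective Rips complexes of finite samples of convex (star-shaped) pieces, and at scales in $(\varepsilon_0/M,\varepsilon_0)$ the thin-simplex condition is mild enough that such complexes are contractible. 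This is the step I expect to be the main obstacle, because the selective condition of Definition~\ref{sRips} does not automatically produce the simplices needed for contractibility; controlling when the required thin partitions exist, uniformly over the scale window parametrized by $M$, is the crux and is where the quantitative constraint $\varepsilon_0 > \tilde r > \varepsilon_0/M$ is consumed.

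With goodness in hand, the heart of the argument is the combinatorial identification $\Nerve(\mathcal{C}) \cong \Nerve(\mathcal{W})$. Here I would choose $\delta$ small (depending on $X$, $\varepsilon_0$ and $M$): since $d_{GH}(X,Y)<\delta$ lets me compare $Y$ with $X$ through a near-isometry, if $\bigcap_{i\in\sigma} C_i \neq \emptyset$ then nearby points of $Y$ span a simplex contained in every $W_i$ for $i\in\sigma$, so $\bigcap_{i\in\sigma} W_i \neq\emptyset$; conversely, if the balls indexed by $\sigma$ have empty common intersection, the corresponding vertex neighborhoods in $Y$ are separated by more than the active scales, forcing $\bigcap_{i\in\sigma} W_i = \emptyset$. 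This makes $\Nerve(\mathcal{W})$ agree, vertex for vertex and simplex for simplex, with $\Nerve(\mathcal{C})$. Chaining the equivalences yields
$$
\sRips(Y;\tilde r) \simeq \Nerve(\mathcal{W}) \cong \Nerve(\mathcal{C}) \simeq X,
$$
which is the first assertion.

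For the functorial statement I would run the construction simultaneously for $\tilde s$ and $\tilde r$ using the same centers $\{x_i\}$, obtaining covers $\mathcal{W}^s$ of $\sRips(Y;\tilde s)$ and $\mathcal{W}^r$ of $\sRips(Y;\tilde r)$ (shrinking $\delta$ so that the first assertion applies to both sequences). Because $s_i\le r_i$ gives $\sRips(Y;\tilde s)\subseteq \sRips(Y;\tilde r)$, each $W_i^s$ sits inside $W_i^r$, so $\mathcal{W}^s$ is subordinated to $\mathcal{W}^r$ with $U_{W_i^s}=W_i^r$, and both nerves are identified with $\Nerve(\mathcal{C})$ in a way under which the subordination map is the identity. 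The functorial part of Theorem~\ref{ThmNerve} then produces a square, commuting up to homotopy, whose horizontal maps are homotopy equivalences and whose right vertical map $\Nerve(\mathcal{W}^s)\to\Nerve(\mathcal{W}^r)$ is an isomorphism. Consequently the left vertical map, namely the natural inclusion $\sRips(Y;\tilde s)\hookrightarrow\sRips(Y;\tilde r)$, is a homotopy equivalence, completing the proof.
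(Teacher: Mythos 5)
Your strategy is the paper's own: a good cover $\mathcal{C}$ of $X$ by small metric balls, a combinatorially identical good cover $\mathcal{W}$ of $\sRips(Y;\tilde r)$ by (thickened) selective Rips subcomplexes spanned by the vertices in corresponding balls, the functorial nerve theorem on both sides, and the same subordination argument for the second claim. However, there is a genuine gap in your identification $\Nerve(\mathcal{C})\cong\Nerve(\mathcal{W})$, precisely in the direction you justify by saying that if the balls indexed by $\sigma$ have empty common intersection then ``the corresponding vertex neighborhoods in $Y$ are separated by more than the active scales.'' Empty intersection of finitely many \emph{open} balls gives no positive separation at all: the closed balls may still meet, e.g.\ when three centers lie on a circle of radius exactly $q$ (a \emph{critical} radius), so that the common intersection of the open balls is empty while every thickening $\bigcap_i B(x_i,q+\mu)$ is nonempty. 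At such a radius the claim fails robustly: take $Y=X\cup\{p\}$, where $p$ is a ``fake circumcenter'' with $d_Y(p,z)=\min\bigl(\epsilon+d_X(c,z),\ \min_i[\,q-\epsilon+d_X(x_i,z)]\bigr)$ for $z \in X$ and $c$ the true circumcenter; one checks this is a metric, $d_{GH}(X,Y)\leq\epsilon$, and yet under the natural pseudometric $d'$ the point $p$ lies in all three balls $B'(x_i,q)$. So for every $\delta>0$ there is an admissible $Y$ for which $\Nerve(\mathcal{W})$ has a simplex that $\Nerve(\mathcal{C})$ lacks, and your chain of equivalences breaks. Since you only require $q$ to be ``comparable to $\varepsilon_0$,'' nothing in your setup excludes critical radii.

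The missing idea is the genericity step that the paper isolates as Lemma \ref{lemmaMu}: for a finite center set $A$ there are only finitely many critical radii, so $\alpha$ can be chosen so that some $\mu>0$ satisfies, for every $\sigma\subseteq A$, that $\bigcap_{x\in\sigma}B(x,\alpha)=\emptyset$ iff $\bigcap_{x\in\sigma}B(x,\alpha+\mu)=\emptyset$; taking $\delta<\mu/2$ then makes your converse direction true, because a witness $y\in\bigcap_{x\in\sigma}B'(x,\alpha)\cap Y$ produces a point of $X$ inside $\bigcap_{x\in\sigma}B(x,\alpha+\mu)$, a contradiction. Without this (or an equivalent stability hypothesis on the radius) the nerve isomorphism step is false as argued. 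A secondary imprecision: the pieces $\bigcap_i B'(x_i,\alpha)\cap Y$ of your cover are not ``finite samples'' of convex sets --- the theorem allows $Y$ to be an arbitrary metric space Gromov--Hausdorff close to $X$ --- so goodness of $\mathcal{W}$ requires the Gromov--Hausdorff version of crushability (Lemma \ref{lemmaLatschev2}, fed by the Hausdorff-distance control of Lemma \ref{LatschevTech} and the exponential-map comparison), not merely the $\delta$-dense-subset version (Lemma \ref{lemmaLatschev}).
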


Proximity parameter $\delta$ depends on $X$, $\varepsilon_0$, and $M$. The role of $M$ is twofold. In the first part of Theorem \ref{Latschev} it merely ensures that the scales $r_i$ have a positive limit. In the second part it formalizes a common lower bound on the limit of the scales. As a special case we obtain the following generalization of \cite{Latschev} to a functorial setting.

\begin{corollary}\label{LatschevCor}
Let $X$ be a closed Riemannian manifold. Then there exists a positive number $\varepsilon_0$, such that for any $L \in (0, \varepsilon_0)$, there exists $\delta>0$ such that for every metric space $Y$ with  $d_{GH}(X,Y) < \delta$ we have $\Rips\left (Y, r\right )\simeq X, \ \forall r \in (L,\varepsilon_0)$. Furthermore, if $s \in (L,r)$, then the natural inclusion $\Rips\left (Y, s\right ) \to \Rips\left (Y, r\right )$ is a homotopy equivalence.
\end{corollary}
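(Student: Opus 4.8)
The plan is to derive Corollary~\ref{LatschevCor} as a direct specialization of Theorem~\ref{Latschev} to constant sequences, exploiting the identity $\sRips(Y; r, r, r, \ldots) = \Rips(Y, r)$ recorded after Definition~\ref{sRips}. The only genuine subtlety is that the corollary demands a single $\delta$ working simultaneously for every scale $r$ in the interval $(L, \varepsilon_0)$; this is precisely where the remark following Theorem~\ref{Latschev} --- that $\delta$ depends only on $X$, $\varepsilon_0$, and $M$, and not on the particular admissible sequence --- becomes essential.

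First I would let $\varepsilon_0 > 0$ be the constant supplied by Theorem~\ref{Latschev}. Given $L \in (0, \varepsilon_0)$, I would fix an integer $M \geq 2$ with $\varepsilon_0/M < L$ (for instance $M = \max\{2, \lfloor \varepsilon_0/L \rfloor + 1\}$) and keep it fixed throughout. Applying Theorem~\ref{Latschev} with this $M$ produces a $\delta > 0$, depending only on $X$, $\varepsilon_0$, and $M$, and hence only on $X$, $\varepsilon_0$, and $L$, such that for every metric space $Y$ with $d_{GH}(X, Y) < \delta$ and every sequence $\tilde{r}$ satisfying $\varepsilon_0 > \tilde{r} > \varepsilon_0/M$ one has $\sRips(Y; \tilde{r}) \simeq X$.

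Next I would check admissibility of the constant sequences. For $r \in (L, \varepsilon_0)$ the sequence $\tilde{r} = (r, r, r, \ldots)$ has $r_1 = r_\infty = r$, so $\varepsilon_0 > r$ and $r > L > \varepsilon_0/M$, that is, $\varepsilon_0 > \tilde{r} > \varepsilon_0/M$. Hence $\Rips(Y, r) = \sRips(Y; \tilde{r}) \simeq X$ for every $r \in (L, \varepsilon_0)$, proving the first claim. For the functorial statement, given $s \in (L, r)$ both $(s, s, \ldots)$ and $(r, r, \ldots)$ lie in the admissible range and satisfy $s \leq r$ in every coordinate, so the second part of Theorem~\ref{Latschev} identifies the natural inclusion $\Rips(Y, s) \to \Rips(Y, r)$ with a homotopy equivalence of selective Rips complexes.

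I expect no serious obstacle, since everything reduces to reading off the constant-sequence case of the main theorem. The one point that must not be glossed over is the order of quantifiers: I fix $M$, and therefore $\delta$, \emph{before} quantifying over $r$, so that the uniformity of $\delta$ guaranteed by the remark delivers a single proximity bound valid across the whole interval $(L, \varepsilon_0)$.
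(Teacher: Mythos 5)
Your proof is correct and matches the paper's intended derivation: the paper states Corollary~\ref{LatschevCor} as a special case of Theorem~\ref{Latschev} obtained via the identity $\Rips(Y,r)=\sRips(Y;r,r,\ldots)$, relying exactly as you do on the remark that $\delta$ depends only on $X$, $\varepsilon_0$, and $M$. Your explicit handling of the quantifier order (fixing $M$ with $\varepsilon_0/M < L$ before quantifying over $r$) is precisely the point the paper leaves implicit.
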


The two results thus imply that for each $X$ there exists an (open) interval of scales on which persistent homology of $\sRips\left (Y; \tilde{r}\right )$ is constant and isomorphic to $X$. The right endpoint of the said interval is $\varepsilon_0$, the left endpoint $L$ can be chosen arbitrarily close to $0$, and with these choices the required proximity $\delta$ can be proved to exist. 

%%%%%%%%%%%%%%%%%%%%%%
\section{$\tilde r$-crushings}

In this section we explain how geometry of metric spaces close to an Euclidean disc or its star-shaped subsets results in contractible selective Rips complexes of the said spaces. 
We first define a discrete version of crushings \cite{Hausmann} on metric spaces. A similar version in the context of Rips complexes has also been used in \cite{Latschev}.

\begin{definition}\label{definitionCrushing1}
Let $X$ be a metric space and let $\tilde{r}\geq r_\infty>0$ be a sequence of positive numbers. Suppose we are given a non-empty subset $A\subsetneq X$ and a point $b\in X\setminus A$ satisfying:
$$B(a,r_i)\subset B(b,r_i)\textrm{ for all }a\in A\textrm{ and for all positive integers }i.$$
Then the map $c:X\to X\setminus A$ mapping $A$ to the point $b\in X$ and leaving all other points fixed is called an $\tilde{r}$-\textbf{crushing}. If $A$ is a singleton, map $c$ is called an elementary $\tilde{r}$-crushing.
\end{definition}

As a demonstrative example consider $\{0, 1, 2, \ldots, 10 \} \subset \RR$ as a subspace of the Euclidean line. If $r_i > 2, \forall i$, then $\{0, 1, 2, \ldots, 10 \} \to \{ 1, 2, \ldots, 10 \}$ is an $\tilde{r}$-crushing (with $a=0$ and $b=1$ in the notation of Definition \ref{definitionCrushing1}). In fact, there exists a sequence of $\tilde{r}$-crushings from $\{0, 1, 2, \ldots, 10 \}$ to the singleton $\{10\}$ (or any other singleton of the space).

Observe that an $\tilde{r}$-crushing consists of a sequence of elementary $\tilde{r}$-crush\-ings. The following lemma states that $\tilde{r}$-crushings induce contiguous maps (and thus homotopy equivalences) on selective Rips complexes.

\begin{lemma}
Map $c$ induces a simplicial map  $\sRips\left (X;\tilde{r} \right ) \to \sRips\left (X\setminus A;\tilde{r}\right ) \subseteq \sRips\left (X;\tilde{r} \right )$ which is contiguous to the identity on $\sRips (X; \tilde{r} )$.
\end{lemma}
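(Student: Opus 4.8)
The plan is to extract a single geometric consequence from the crushing condition $B(a,r_i)\subset B(b,r_i)$ and then apply it twice: once to prove that $c$ is simplicial, and once to prove contiguity with the identity. The key observation is that this containment of balls is exactly the implication $d(a,y)<r_i \Rightarrow d(b,y)<r_i$ for every $y\in X$. Consequently, whenever a finite set $T\subseteq X$ satisfies $\Diam(T)<r_i$ and meets $A$ in some point $a$, every $y\in T$ obeys $d(a,y)\le \Diam(T)<r_i$, hence $d(b,y)<r_i$; combined with $\Diam(T)<r_i$ this shows that both $T\cup\{b\}$ and $c(T)=(T\setminus A)\cup\{b\}$ have diameter strictly less than $r_i$. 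This is the only place the crushing hypothesis enters, and I expect it to be the one genuinely content-bearing step; everything afterward is bookkeeping over the clustering index $i$.

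To show $c$ is simplicial I would fix a simplex $\sigma$ of $\sRips(X;\tilde r)$ and a positive integer $i$, and write $\sigma=S_1\cup\cdots\cup S_i$ with $\Diam(S_j)<r_i$ for each $j$, as guaranteed by Definition \ref{sRips}. Applying $c$ gives $c(\sigma)=c(S_1)\cup\cdots\cup c(S_i)$. If $S_j\cap A=\emptyset$ then $c(S_j)=S_j$, while if $S_j\cap A\ne\emptyset$ the consequence above gives $\Diam(c(S_j))<r_i$; in either case $\Diam(c(S_j))<r_i$. Thus $c(\sigma)$ is a union of $i$ sets of diameter less than $r_i$, and since $i$ was arbitrary, $c(\sigma)$ is a simplex. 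As $c$ takes values in $X\setminus A$, it lies in $\sRips(X\setminus A;\tilde r)$, which proves that $c$ induces the claimed simplicial map.

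For contiguity with the identity I must show that $\sigma\cup c(\sigma)$ is a simplex for every simplex $\sigma$. If $\sigma\cap A=\emptyset$ then $c(\sigma)=\sigma$ and the union is just $\sigma$. If $\sigma\cap A\ne\emptyset$ then $c(\sigma)=(\sigma\setminus A)\cup\{b\}$, so $\sigma\cup c(\sigma)=\sigma\cup\{b\}$, and it suffices to prove the latter is a simplex. Fixing $i$ and a clustering $\sigma=S_1\cup\cdots\cup S_i$ as before, I would pick an index $j_0$ with $S_{j_0}\cap A\ne\emptyset$ (possible since the clusters cover $\sigma$ and $\sigma$ meets $A$), enlarge that single cluster to $S_{j_0}\cup\{b\}$, and leave the others unchanged. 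By the consequence above $\Diam(S_{j_0}\cup\{b\})<r_i$, so $\sigma\cup\{b\}$ is again a union of $i$ sets of diameter less than $r_i$; letting $i$ range shows $\sigma\cup\{b\}$ is a simplex. Since contiguous maps are homotopic, this simultaneously yields the homotopy equivalence advertised before the lemma.

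The only subtlety worth flagging is organizational rather than mathematical: in both arguments the clustering witnessing that $\sigma$ is a simplex depends on $i$, so the point $a\in A$ used to absorb $b$ (or to carry $b$ along) may sit in a different cluster for different $i$. This causes no difficulty, because the simplex condition of Definition \ref{sRips} is verified one $i$ at a time, and for each $i$ the crushing inequality supplies exactly the diameter control needed. I therefore expect no real obstacle beyond correctly tracking which clusters meet $A$.
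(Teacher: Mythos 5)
Your proof is correct, and the single geometric fact it isolates --- that $B(a,r_i)\subset B(b,r_i)$ means $d(a,y)<r_i \Rightarrow d(b,y)<r_i$, so $b$ can be absorbed into any cluster of diameter less than $r_i$ that meets $A$ --- is exactly the content the paper's much terser proof rests on. The structural difference is this: the paper first reduces to the case where $A$ is a singleton (an elementary $\tilde r$-crushing), notes $\Diam(\{a,b\})<r_i$ for all $i$, and then simply asserts that $(\sigma\setminus\{a\})\cup\{b\}$ and $\sigma\cup\{b\}$ are simplices, leaving the index-by-index clustering bookkeeping (which you spell out) implicit; the general case is then claimed to follow by composing elementary crushings. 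You instead treat a general $A$ in one pass. Your route buys something real: contiguity is not a transitive relation, so the paper's reduction, read literally, yields a chain of maps each contiguous to an identity --- enough for the homotopy conclusion used in Corollary \ref{CorollaryCrushable}, but not literally the stated fact that $c$ itself is contiguous to $\mathrm{id}$ on $\sRips(X;\tilde r)$ when $A$ has more than one point. Your direct verification that $\sigma\cup c(\sigma)=\sigma\cup\{b\}$ is a simplex closes that small gap. Your flagged subtlety (the cluster containing a point of $A$ may vary with $i$) is also handled correctly, since Definition \ref{sRips} is checked one $i$ at a time; this is precisely the detail the paper glosses over.
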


\begin{proof}
We show the statement for $A=\{a\}$, a general case follows by a sequence of elementary $\tilde{r}$-crushings. 
 We see that $\Diam(\{a,b\})<r_i, \forall i$. Let $\sigma$ be a simplex in $\sRips\left (X;\tilde{r}\right )$ such that $a\in  \sigma$. Then  $(\sigma\setminus \{a\})\cup \{b\}$ is a simplex in $\sRips\left (X\setminus \{a\};\tilde{r}\right )$ and $\sigma\cup \{b\}$ is a simplex in $\sRips\left (X;\tilde{r} \right )$. 
\end{proof}

%The proof actually implies that $\sRips (X\setminus A;\tilde{r} )$ is a strong deformation retract of $\sRips (X;\tilde{r} )$. 

A metric space is $\tilde{r}$-\textbf{crushable} if there is a sequence of $\tilde{r}$-crushings to a space of diameter less than $\tilde{r}$. Clearly, the selective Rips complex with parameters $\tilde r$ of such a space is contractible in the usual sense. On a related note, if $x\in X$ with $B(x,r_\infty)=X$, then $X \to \{x\}$ is an $\tilde r$-crushing and thus $\sRips(X; \tilde r)$ is contractible.

\begin{corollary}\label{CorollaryCrushable}
If $X$ is $\tilde{r}$-crushable, then $\sRips (X; \tilde{r} )$ is contractible.
\end{corollary}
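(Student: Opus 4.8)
The plan is to iterate the preceding Lemma. A single $\tilde r$-crushing already induces a homotopy equivalence of the associated selective Rips complexes, so a space that can be crushed down to small diameter is connected by a chain of such equivalences to a complex that is visibly contractible. First I would upgrade the Lemma from a one-sided statement to a genuine homotopy equivalence. Given a crushing $c\colon X\to X\setminus A$, write $\iota\colon \sRips(X\setminus A;\tilde r)\hookrightarrow \sRips(X;\tilde r)$ for the inclusion and $c_*$ for the induced simplicial map of the Lemma. The Lemma gives that $\iota\circ c_*$ is contiguous, hence homotopic, to $\mathrm{id}_{\sRips(X;\tilde r)}$. Since $c$ fixes every point of $X\setminus A$, the map $c_*$ sends each vertex of $\sRips(X\setminus A;\tilde r)$ to itself, so $c_*\circ\iota=\mathrm{id}$ on $\sRips(X\setminus A;\tilde r)$. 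Thus $\iota$ and $c_*$ are mutually inverse homotopy equivalences.

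Next I would use the definition of $\tilde r$-crushability to produce a sequence of $\tilde r$-crushings $X=X_0\to X_1\to\cdots\to X_n=Z$ with $\Diam(Z)<\tilde r$, i.e. $\Diam(Z)<r_\infty$. Composing the homotopy equivalences from the previous step along the chain of inclusions $\sRips(Z;\tilde r)\hookrightarrow\cdots\hookrightarrow\sRips(X;\tilde r)$ yields $\sRips(X;\tilde r)\simeq \sRips(Z;\tilde r)$.

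Finally I would check that the terminal complex is contractible. Since $\Diam(Z)<r_\infty\le r_i$ for every $i$, any finite $\sigma\subseteq Z$ has diameter $<r_i$ and is therefore trivially a union of $i$ sets of diameter $<r_i$; hence every finite subset of $Z$ is a simplex and $\sRips(Z;\tilde r)$ is the full simplex on vertex set $Z$. Fixing a vertex $z_0$, the property that $\tau\cup\{z_0\}$ is a simplex whenever $\tau$ is exhibits this complex as a simplicial cone with apex $z_0$, so it is contractible. Combining with the previous paragraph gives that $\sRips(X;\tilde r)$ is contractible.

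The one point that requires care, and which I expect to be the main obstacle, is the length of the crushing sequence. If $X$ is finite the sequence is finite, each crushing strictly shrinks the vertex set, and the composition above is literal. For a possibly infinite sequence one cannot naively compose infinitely many homotopy equivalences; there I would instead realize $\sRips(X;\tilde r)$ as the colimit of the intermediate complexes and pass to the limit, or, more robustly, argue directly that $\mathrm{id}_{\sRips(X;\tilde r)}$ factors up to homotopy through the contractible complex $\sRips(Z;\tilde r)$ and is therefore nullhomotopic. The contractibility of the full simplex on a possibly infinite vertex set is a secondary subtlety, but the cone argument above handles it uniformly.
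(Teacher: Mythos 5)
Your proof is correct and takes essentially the same route as the paper, which treats the corollary as immediate from the preceding Lemma: each $\tilde r$-crushing induces a homotopy equivalence of selective Rips complexes (contiguous to the identity one way, literally the identity the other way), and the terminal space of diameter less than $r_\infty$ gives a full simplex, i.e.\ a simplicial cone, hence contractible. The infinite-composition caveat you raise does not actually arise, since the crushing sequences in the paper's definition and in all its applications (Lemmas \ref{lemmaLatschev} and \ref{lemmaLatschev2}) consist of finitely many crushings, each of which may collapse a possibly infinite set $A$ to a point in a single step.
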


\begin{remark}
Let $X$ be a metric space and $x\in X$. If positive radii $r_i$ converge to $0$, the conditions of Definition \ref{definitionCrushing1} can not be satisfied.
\end{remark}

The following is a counterexample to the conclusion of Theorem \ref{Latschev}, in the case the sequence of parameters $r_i$ converges to $0$. Given $\delta>0$ a subset $F \subset X$ of a metric space $X$ is $\delta$-\textbf{dense} if $\forall x \in X \ \exists a\in F: d(a,x)<\delta.$

\begin{example}
\label{Example1}
Let $D=[0,1]$ be the unit interval. Let an $\varepsilon_0 \in (0,1)$  and let $\tilde{r}$ be a sequence of positive radii which converges to $0$ such that the following holds:
\begin{itemize}
\item $\varepsilon_0\geq r_1$,
\item $r_{i-1}>(i+1)r_i$ for all positive integer $i$.
\end{itemize}

For each $\delta>0$ let $F_\delta$ be a finite, $\delta$-dense subset of $D$:
$$D=\bigcup_{x\in F_\delta}B(x,\delta).$$

Let us fix a set $F_\delta$. There exists a positive integer $n$ (for any $\varepsilon_0$) such that the pairwise distance between points in $F_\delta$ is more than $2r_{n-1}$. Then there is an $x\in D$ such that $F_\delta\cap \overline{B(x,r_{n-1})}=\emptyset$.

Let $Y=\{x_0,x_1,\ldots,x_n\}\subseteq B\left(x,\frac{r_{n-1}}{2}\right )$ be a set of $n+1$ points in $D$ (they can be evenly spaced) such that the pairwise distances are larger than $r_n$. Such a set exists because $r_{n-1}>(n+1)r_n$.  Note that $F_\delta\cup Y$ is also finite, $\delta$-dense subset of $D$. Since diameter of any subset of $Y$ (and also of $F_\delta\cup Y$) with at least two points is greater than $r_n$ it is clear that $\sRips (F_\delta\cup Y; \tilde{r} )$ has no $n$-dimensional simplices. On the other hand, each subset $Y\setminus\{x_i\}$ is an $(n-1)$-simplex in $\sRips (F_\delta\cup Y; \tilde{r} )$. Therefore, $Y$ forms a non trivial $n$-dimensional homotopy class.

The same argument also applies to a closed manifold $S^1$.

%Note that we have actually proved that the disk $D$ is not homotopy equivalent to $\sRips (F_\delta\cup Y; \tilde{r} )$ even though $d_{GH}(D,F_\delta\cup Y) < \delta$. 
%
%We showed that Theorem \ref{Latschev} does not even hold for a closed unit interval, if a sequence of parameters $r_i$ converges to $0$.
\end{example}

The following is an adaptation of \cite[Lemma 2.1.]{Latschev}.

\begin{lemma}\label{lemmaLatschev}
Let $B(0,\alpha)\subset \RR^n$ be the standard open ball of radius $\alpha > 0$, let $D\subseteq B(0,\alpha)$ be a star-shaped set centered at $0$, and let $\tilde{r}$ with $\alpha \geq\tilde{r}\geq r_\infty>0$ be fixed. Then there exists $\delta_1>0$ such that every finite $\delta_1$-dense subset $F\subset D$ is $\tilde{r}$-crushable.
\end{lemma}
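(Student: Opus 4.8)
The plan is to produce, for a suitable $\delta_1$, an explicit sequence of elementary $\tilde r$-crushings that removes the points of $F$ one at a time, working from the outside of $D$ inward, until only a cluster of $\Diam$ less than $r_\infty$ survives; by the definition of $\tilde r$-crushable this suffices. I would order $F=\{f_1,\dots,f_N\}$ by nonincreasing Euclidean norm $|f_1|\ge\cdots\ge|f_N|$ and repeatedly crush the current outermost point, always sending it to a sample point lying slightly radially inward.

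Concretely, to crush the current outermost point $a$ (with $R:=|a|\le\alpha$, so $|f|\le R$ for every remaining $f$), I would fix a small factor $t>0$ and set $b_0:=(1-t)a$. Since $a\in D$ and $D$ is star-shaped at $0$, the geodesic (segment) from $a$ to $0$ lies in $D$, so $b_0\in D$; by $\delta_1$-density there is a sample point $b\in F$ with $|b-b_0|<\delta_1$. Because $|b|<(1-t)R+\delta_1<R$, the point $b$ is strictly inward, so $b\neq a$ and, crucially, $b$ is not among the already-crushed outer points (which all have norm $\ge R$) and is therefore still present. This is exactly why processing from the outside in keeps the relevant target regions at their original density, which otherwise could be destroyed by removing points.

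The heart of the proof is verifying the crushing condition $B(a,r_i)\cap F\subseteq B(b,r_i)\cap F$ for every index $i$ at once, i.e. that $|f-a|<r_i$ forces $|f-b|<r_i$ for all remaining $f$. The main computation is the identity
\[
 |f-b_0|^2-|f-a|^2=2t\big(\langle f,a\rangle-R^2\big)+t^2R^2 ,
\]
whose right-hand side is $\le 0$ as soon as $\langle f,a\rangle\le R^2(1-t/2)$. I would split the remaining points into two regimes. For $f$ \emph{near} $a$ (say $|f-a|<r_\infty/2$) I use the triangle inequality $|f-b|\le|f-a|+tR+\delta_1<r_\infty\le r_i$ once $t,\delta_1$ are small, so such $f$ are unconditionally fine for every scale. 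For $f$ \emph{far} from $a$ (i.e. $|f-a|\ge r_\infty/2$), the constraint $|f|\le R$ forces $\langle f,a\rangle-R^2\le -r_\infty^2/8$, so the displayed difference is at most $-t\,r_\infty^2/8$; this yields a strict gap $|f-b_0|<r_i-t r_\infty^2/(16\alpha)$, which absorbs the $\delta_1$-perturbation of the target once $\delta_1<t r_\infty^2/(16\alpha)$. The two regimes interlock precisely: the points where the difference degenerates to $0$ (the boundary $\langle f,a\rangle=R^2(1-t/2)$) are forced within $R\sqrt t\le\alpha\sqrt t<r_\infty/2$ of $a$, hence fall into the ``near'' regime.

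The main obstacle I anticipate is choosing the two parameters in the right order so that all estimates hold uniformly in the outermost radius $R\in[r_\infty/2,\alpha]$ and simultaneously for every scale $r_i$. I would first fix $t>0$ small enough that $t<r_\infty^2/(8\alpha^2)$ and $\alpha\sqrt t+\alpha t<r_\infty/2$, and only then fix $\delta_1>0$ small enough for the near-regime bound, smaller than the far-regime gap $t r_\infty^2/(16\alpha)$, and smaller than $t r_\infty/2$ (so that every crushed point moves genuinely inward). With these choices each elementary crush is a valid $\tilde r$-crushing, and iterating until every surviving point has norm below $r_\infty/2$ leaves a set of $\Diam$ less than $r_\infty$, completing the argument.
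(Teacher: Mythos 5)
Your proof is correct, and its outer scaffolding coincides with the paper's: both run a greedy, outside-in induction that crushes the outermost remaining sample point to a sample point located, via star-shapedness and $\delta_1$-density, near an ideal target on the radial segment toward $0$, and both must confirm that this target is strictly inward, so that it is distinct from the crushed point and has not already been removed (your norm argument is exactly the paper's structural facts (1)--(2) and its inductive bullet points). Where you genuinely diverge is in verifying the crushing condition $B(a,r_i)\subset B(b,r_i)$ for all scales at once. The paper constructs the entire region of admissible targets $I_{y,i}=\bigcap_{x\in\overline{B(y,r_i)}\cap\overline{B(0,r')}}B(x,r_i)$, proves the nesting $I_{y,k}\subseteq I_{y,j}$ for $k\geq j$ (this is how all indices $i$ get reduced to the limit scale $r_\infty$), and then computes, via rotational symmetry and planar geometry, an inscribed ball $B(\hat y,\delta_1)$ of explicit radius $\delta_1(r_\infty,r')=r_\infty\bigl(1-\sqrt{1-r_\infty^2/(4r'^2)}\bigr)$. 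You bypass all of that with the identity $|f-b_0|^2-|f-a|^2=2t\bigl(\langle f,a\rangle-R^2\bigr)+t^2R^2$ for the fixed radial target $b_0=(1-t)a$, split into a near regime (triangle inequality against $r_\infty\leq r_i$) and a far regime, where $|f|\leq R$ and $|f-a|\geq r_\infty/2$ force $\langle f,a\rangle-R^2\leq -r_\infty^2/8$ and hence, for $t<r_\infty^2/(8\alpha^2)$ and $r_i\leq\alpha$, the uniform gap $|f-b_0|<r_i-tr_\infty^2/(16\alpha)$ absorbing the $\delta_1$-perturbation; I checked these estimates and they hold, and your gap is precisely the analogue of the paper's structural fact (3). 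Your termination rule (stop once all norms are below $r_\infty/2$, giving diameter $<r_\infty$ outright) is if anything cleaner than the paper's termination inside $B\bigl(0,r_\infty/\sqrt{2}\bigr)$. The trade-off: your route is more elementary and dimension-free (no reduction to two-dimensional slices, no nesting lemma), at the price of soft constants; the paper's route yields the explicit function $\delta_1(r_\infty,r')$ and quotable structural facts, which are reused verbatim later, namely in the proof of Lemma \ref{lemmaLatschev2} (where $\delta_1'=\delta_1(r_\infty,\alpha)/7$ and inequality (\ref{EqFinal}) are checked numerically) and in the scale selection of Subsection \ref{SubsScale}, so substituting your argument would require re-deriving those downstream constants in terms of your $t$ and your gap.
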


\begin{proof} Let $F$ be a finite subset of $D$ whose density as a function of $\tilde r$ and $\alpha$ we will prescribe later. Let $y\in F$ be a point with maximal distance $r'=d(y,0)$ to the origin. For each $i\in \{\infty, 1,2,\ldots\}$ we define 
$$
I_{y,i}=\bigcap_{x\in \overline{B(y,r_i)}\cap\overline{B(0,r')}}B(x,r_i).
$$

Set $\displaystyle \bigcap_{i\in \NN}I_{y,i}$ consists of some (in the case when $D=B(0,\alpha)$, it consists of all) of the the admissible points in $B(0,r')$ that facilitate an $\tilde r$-crushing $F \to F \setminus \{y\}$. We aim to prove this set contains a ball of radius $\delta_1$, which is independent of $y$.

Sets $\overline{B(y,r_i)}\cap\overline{B(0,r')}$ and $I_{y,i}$ are rotationally symmetric hence it suffices to treat their two-dimensional slices containing $0$ and $y$. Choose $p_i, p'_i \in B(0,\alpha)$ with $d(p_i,y)=d(p'_i,y)=r_i$ and $d(p_i,0)=d(p'_i,0)=r'$ as in Figure \ref{figLat1} on the left. Set $\overline{B(y,r_i)}\cap\overline{B(0,r')}$ is the light-grey region, while elementary geometric observations imply $I_{y,i}=B(p_i,r_i)\cap B(p'_i,r_i)$ is the dark-grey region on the mentioned figure.

We first claim that for $k\geq j$ we have $I_{y,k}\subseteq I_{y,j}$. Choose any $z\in I_{y,k}$ and $x_j\in \overline{B(y,r_j)}\cap\overline{B(0,r')}$. If $d(z,x_j)\leq r_k$ define $x_k=x_j$, else define $x_k$ as the point on the line segment from $y$ to $x_j$ with $d(y,x_k)=r_k$. Note that $x_k \in \overline{B(y,r_k)}\cap\overline{B(0,r')}$ and thus
$$
d(z,x_j)\leq d(z, x_k) + d(x_k,x_j) \leq r_k + (r_j-r_k)=r_j.
$$
Thus $z\in I_{y,j}$ and the claim is proved, see Figure \ref{figLat1} on the right.

 \begin{figure}[ht!]
	 	\begin{center}
	    \includegraphics[width=17em]{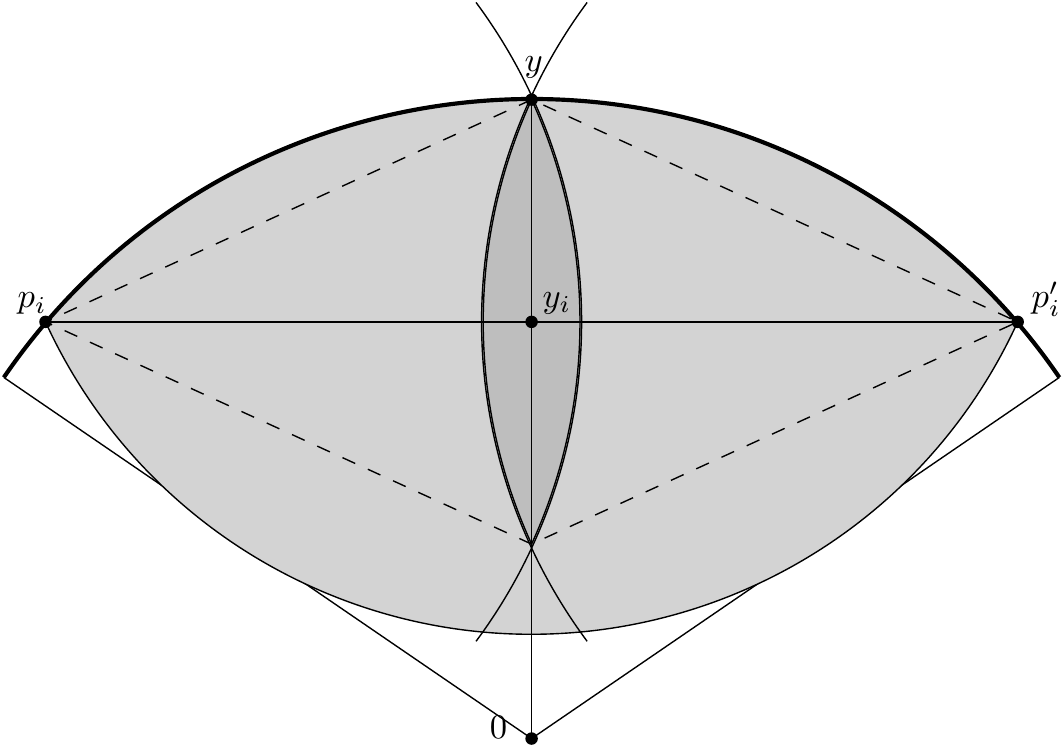}
	    \includegraphics[width=17em]{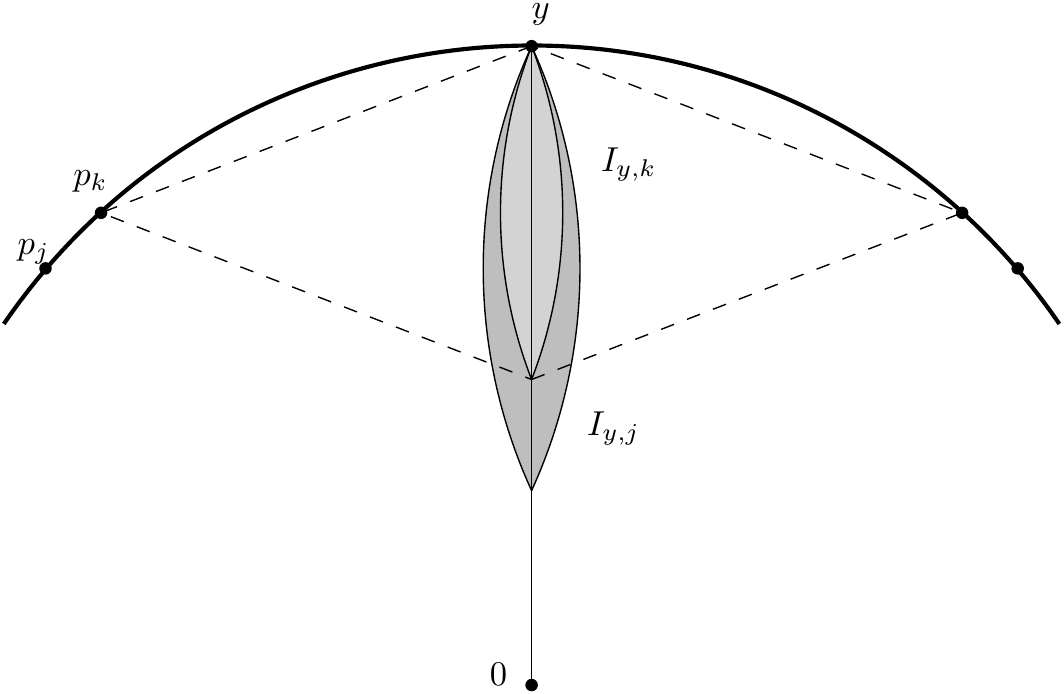}
	    	\caption{A detail from the proof of Lemma \ref{lemmaLatschev}.}
	    \label{figLat1}
	  \end{center}
		\end{figure}

 \begin{figure}[ht!]
	 	\begin{center}
	    \includegraphics[width=17em]{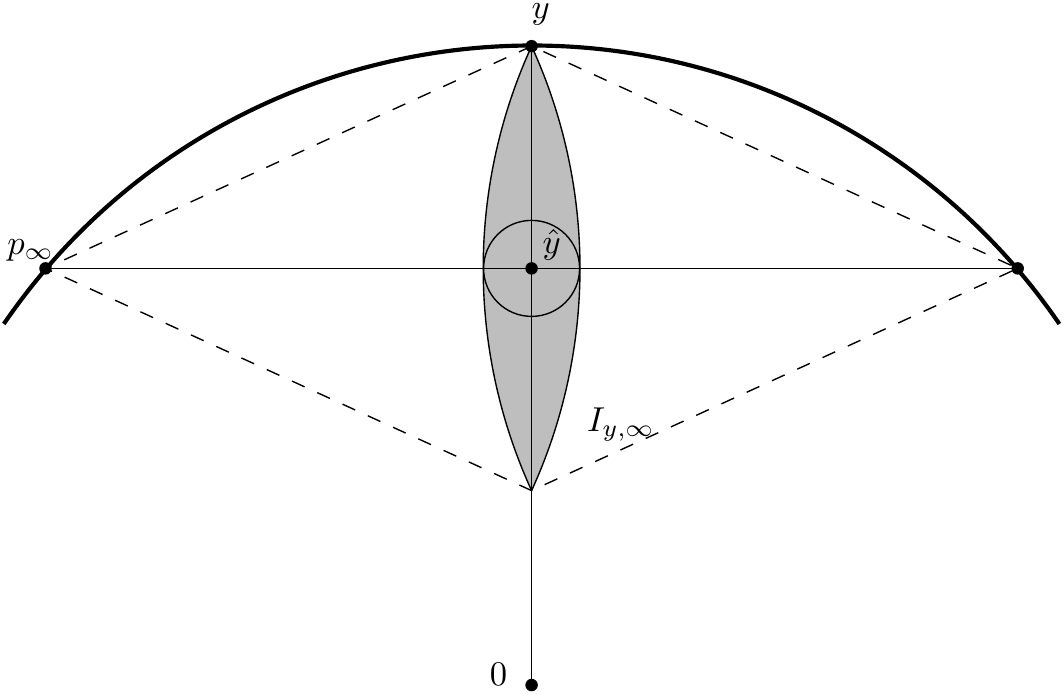}
	    \includegraphics[width=17em]{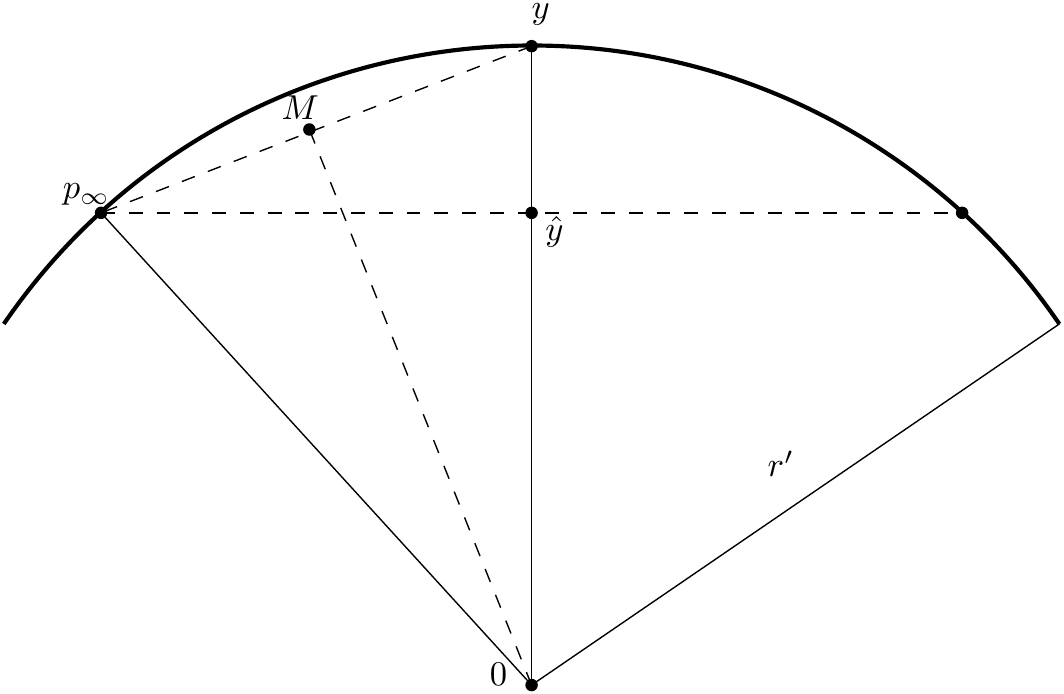}
	    	\caption{A detail from the proof of Lemma \ref{lemmaLatschev}.}
	    \label{figLat2}
	  \end{center}
		\end{figure}

It follows that $\displaystyle \bigcap_{i\in \NN}I_{y,i}=I_{y,\infty}$.  
Let $\hat y$ be the point on the line segment from $0$ to $y$, which is closest to $p_\infty$, see Figure \ref{figLat2} on the left. The radius of the ball centered at $\hat y$ contained in $I_{y,\infty}$ equals $\delta_1=r_\infty - d(\hat y, p_\infty)$. Using elementary geometry on the right side of Figure \ref{figLat2} we see that 
$$
d(y,\hat y)=\frac{r_\infty ^2}{2 r'}, \ \textrm{ and } 
\left(d(p_\infty, \hat y)\right)^2= r_\infty ^2 - \frac{r_\infty ^4}{4 r'^2},
$$
and that the radius $\delta_1$ of the inscribed ball in $I_{y,\infty}$ with center at $\hat y$ (see Figure \ref{figLat2} on the left) equals 
$$
\delta_1(r_\infty, r')=r_\infty - d(p_\infty, \hat y)= r_{\infty} \left( 1-\sqrt{1-\frac{r_\infty^2}{4r'^2}}\right).
$$ 
For a later reference in the proof of Lemma \ref{lemmaLatschev2} we also mention three \emph{structural facts}:
\begin{enumerate}
 \item $\hat y$ lies on the line segment from $y$ to $0$ and thus in $D$.
\item $B(\hat y, \delta_1)$ does not contain $y$. %this requires r_\infty < r' \sqrt{2} 
 \item    $d(x,\hat y) \leq  r_i-\delta_1$ for any $x\in B(0,r')$ with $d(x,y) \leq r_i$ due to the nesting proved in the claim above.
\end{enumerate}

If $F$ is $\delta_1(r_\infty, r')$ dense there exists $q_y\in F$ contained in the ball mentioned above. Thus $y \mapsto q_y$ induces a $\tilde{r}$-crushing $F \to F\setminus\{y\}$. 

We proceed inductively by treating $F \setminus\{y\}$:
\begin{itemize}
 \item Inductive construction based on Figures \ref{figLat1} and \ref{figLat2} are valid for  $ r' > \frac{r_\infty}{\sqrt{2}}$. During the inductive process ever more indices $i$ fail to satisfy  $r' > \frac{r_i}{\sqrt{2}}$.  However, these indices still satisfy structural fact (3) due to the nesting proved in the claim (the claim does not require this condition) as long as $r_\infty > r'$. As a result we construct a sequence of $\tilde{r}$-crushings to a subset of $F$ contained in $B\left(0,\frac{r_\infty}{\sqrt{2}}\right)\subset B(0,r_\infty)$ and thus $F$ is $\tilde{r}$-crushable. 
 \item  For each $y$ chosen in the induction process $\hat y$ is closer to $0$ and thus hasn't been removed yet, leading to the inductive $\tilde r$-crushings being well defined. 
 \item An important feature of this inductive process is the choice of density $\delta_1$. Note that $\delta_1(r_\infty, r')$ is a decreasing function in $r'$ on $\left[\frac{r_\infty}{\sqrt{2}},\infty\right)$ so the declaration $\delta_1=\delta_1(r_\infty, \alpha)$ suffices for the inductive argument, as $r'$ decreases through the inductive argument. Furthermore, structural fact (3) holds throughout induction for $\delta_1=\delta_1(r_\infty, \alpha)$.
\end{itemize}
\end{proof}

Next we generalize results from $\delta$-dense subsets of $X$ to spaces at proximity $\delta$ to $X$. Let $D\subset B(0,\alpha)\subset \RR^n$ be a star-shaped subset of the standard flat open ball of radius $\alpha>0$. Let $(Y,d)$ be a metric space whose Gromov-Hausdorff distance to $D$ is less than $\delta>0$. As in \cite{Latschev} we define a pseudo-metric $d'$ on disjoint union $Y\sqcup D$ extending $d$ on $Y$ and $d_{eucl}$ on $D$ such that $Y$ is contained in the $\delta$-neighbourhood of $D$ and vice versa, i.e., $D \subseteq N_{d'}(Y, \delta)$ and $Y \subseteq N_{d'}(D, \delta)$. The existence of a pseudo-metric $d'$ follows straight from the definition of the Gromov-Hausdorff metric. Balls in metric $d'$ will be denoted by $B'$.

The following lemma is generalization of Lemma \ref{lemmaLatschev}. The proof is an adaptation of \cite[Lemma 3.1]{Latschev}.

\begin{lemma}\label{lemmaLatschev2}
Let $D\subset B(0,\alpha)\subset \RR^n$ be a star-shaped subset of the standard flat open ball of radius $\alpha$ centered at $0$, and let $\tilde r$ with $\alpha \geq \tilde{r}\geq r_\infty>0$ be fixed. Then  every metric space $(Y,d)$ whose Gromov-Hausdorff distance to $D$ is less than $\delta'_1=\delta_1(r_\infty, \alpha)/\K$ is $\tilde{r}$-crushable.
\end{lemma}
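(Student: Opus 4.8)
The plan is to transfer the inductive Euclidean crushing of Lemma \ref{lemmaLatschev} to $Y$ through the pseudo-metric $d'$ on $Y\sqcup D$, with the constant $\K$ absorbing the accumulated triangle-inequality errors. Throughout, let $\delta<\delta'_1=\delta_1(r_\infty,\alpha)/\K$ be the Gromov--Hausdorff proximity realized by $d'$, so that every point of $Y$ is $d'$-within $\delta'_1$ of a point of $D$ and vice versa. The goal is to produce a sequence of elementary $\tilde r$-crushings of $Y$ (in the sense of Definition \ref{definitionCrushing1}), each one mirroring a Euclidean crushing in $D$, until the remaining set has diameter less than $r_\infty$.

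I would argue inductively, at each stage removing the point $y\in Y$ that is $d'$-farthest from the center $0$. Pick $\tilde y\in D$ with $d'(y,\tilde y)<\delta'_1$ and set $r'=d_{eucl}(\tilde y,0)$; running the Euclidean construction of Lemma \ref{lemmaLatschev} on $\tilde y$ produces the target $\hat y$ on the segment $[\tilde y,0]\subseteq D$ together with its inscribed ball $B(\hat y,\delta_1)\subseteq I_{\tilde y,i}$. Since $\hat y\in D$, there is $q\in Y$ with $d'(q,\hat y)<\delta'_1$, and $q$ is the candidate crushing target. Structural fact (2) gives $d_{eucl}(\tilde y,\hat y)\geq \delta_1=\K\delta'_1$, whence $d'(q,y)>0$ and $q\neq y$; structural fact (1) places $\hat y$ strictly closer to $0$ than $\tilde y$, so $q$ survives to later stages and the induction advances exactly as in Lemma \ref{lemmaLatschev}.

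The crux is to verify that $y\mapsto q$ is a genuine elementary $\tilde r$-crushing of $(Y,d)$, i.e.\ that $B(y,r_i)\subseteq B(q,r_i)$ for every $i$. Given $z\in Y$ with $d(y,z)<r_i$, choose $\tilde z\in D$ with $d'(z,\tilde z)<\delta'_1$; passing through $d'$ twice shows that $\tilde z$ is $d_{eucl}$-within $2\delta'_1$ of each of the balls $\overline{B(\tilde y,r_i)}$ and $\overline{B(0,r')}$, the latter precisely because $y$ is $d'$-farthest from $0$. Projecting $\tilde z$ to a nearest point $x$ of the lens $L=\overline{B(\tilde y,r_i)}\cap\overline{B(0,r')}$ and invoking structural fact (3), namely $d_{eucl}(x,\hat y)\leq r_i-\delta_1$, gives $d_{eucl}(\tilde z,\hat y)\leq r_i-\delta_1+\mathrm{dist}(\tilde z,L)$; a final passage back through $d'$ yields $d(q,z)\leq d_{eucl}(\hat y,\tilde z)+2\delta'_1<r_i$, the margin $\delta_1=\K\delta'_1$ being exactly what is required to swallow all the $\delta'_1$-sized errors. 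The main obstacle is this bookkeeping, and specifically the estimate $\mathrm{dist}(\tilde z,L)=O(\delta'_1)$: the distance to an intersection of two balls is not the maximum of the distances to each, so one must use the fatness of $L$ guaranteed by the inscribed $\delta_1$-ball of Lemma \ref{lemmaLatschev} to keep the projection error linear in $\delta'_1$, and it is this count that pins down the value $\K$.

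Finally, once the $d'$-radius of the remaining subset of $Y$ drops below the threshold $r_\infty/\sqrt{2}$ governing the Euclidean construction, the nesting of structural fact (3) (which requires no lower bound on $r'$) continues to supply valid crushings down to a set of diameter less than $r_\infty$, exactly as in the closing induction of Lemma \ref{lemmaLatschev}. This certifies that $Y$ is $\tilde r$-crushable.
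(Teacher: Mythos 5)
Your overall strategy (transfer the Euclidean crushing of Lemma \ref{lemmaLatschev} to $Y$ through $d'$, with $\K$ absorbing the accumulated errors) is the right one, but your induction scheme has a structural flaw that the paper's proof is specifically built to avoid. You induct over the points of $Y$ itself, ``at each stage removing the point $y\in Y$ that is $d'$-farthest from the center $0$,'' i.e.\ you use one elementary crushing per point of $Y$. However, the lemma assumes nothing about $Y$ beyond $d_{GH}(Y,D)<\delta'_1$: the space $Y$ may be infinite, indeed uncountable (e.g.\ a slightly perturbed copy of $D$ itself). Then a $d'$-farthest point need not exist, and even when it does, removing one point at a time can never reach a set of diameter less than $r_\infty$ in finitely many steps, so it does not produce the (terminating) ``sequence of $\tilde r$-crushings'' required for $\tilde r$-crushability and for Corollary \ref{CorollaryCrushable}. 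Definition \ref{definitionCrushing1} allows a crushing to collapse a whole subset $A$, not just a singleton, and the paper exploits exactly this: it fixes a \emph{finite} $\delta'_1$-dense set $F\subset D$, inducts over $y\in F$ in order of decreasing $d_{eucl}(y,0)$, and at each of these finitely many steps crushes the entire (possibly infinite) set $B'(y,2\delta'_1)\cap Y$ to a single point $\hat y'\in Y$ near $\hat y$; after the last step the remainder of $Y$ lies in a $d'$-ball of radius $r_\infty/\sqrt{2}+3\delta'_1\leq r_\infty$ about a point $0'\in Y$. Your argument must be reorganized around such a finite index set; this is not a cosmetic repair.

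A second, smaller gap is the bookkeeping you deferred. The constant $\K=7$ is fixed in the statement, so you must verify that your error count is at most $7\delta'_1$; you cannot let the count ``pin down the value $\K$'' afterwards. On your route the natural estimate is $\delta'_1$ (from $z$ to $\tilde z$), plus $\mathrm{dist}(\tilde z,L)$, plus $\delta'_1$ (from $\hat y$ to $q$), and the straightforward bound on $\mathrm{dist}(\tilde z,L)$ — radially retract $\tilde z$ into $\overline{B(0,r')}$ at cost $2\delta'_1$, then slide toward $\tilde y$ along a segment that stays in $\overline{B(0,r')}$ by convexity, at cost another $4\delta'_1$ — gives $6\delta'_1$, for a total of $8\delta'_1>\delta_1=7\delta'_1$, so the crushing inequality $d(z,q)<r_i$ fails. (Note also that the tool here is convexity of $\overline{B(0,r')}$ together with $\tilde y\in\overline{B(0,r')}$, not ``fatness'' of the lens; fatness alone would need a separate angle estimate.) The paper lands on exactly $7\delta'_1=2\delta'_1+4\delta'_1+\delta'_1$ precisely because its comparison point $x$ is taken in $F$, whose remaining points are already guaranteed to lie in $\overline{B(0,r')}$ — so only the slide-toward-$y$ correction of $4\delta'_1$ is needed — while the set being crushed is the $2\delta'_1$-ball about $y\in F$, which costs $2\delta'_1$ once rather than $\delta'_1$ twice plus a radial correction.
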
 

\begin{proof}
We will be using the notation of the two paragraphs before the statement of Lemma \ref{lemmaLatschev2}.

Fix a finite $\delta'_1$-dense $F \subseteq D$. Note that the $d'$-balls  of radius $2\delta'_1$ around points of $F$ are a cover of $Y \sqcup D$. We proceed as follows:
\begin{itemize}
 \item Let $y\in F$ be at maximal distance from $0$ and let $r'=d_{eucl}(y,0)$.
 \item Choose $\hat y\in D$ as in the proof of Lemma \ref{lemmaLatschev}.
 \item Choose $\hat y'\in Y$ with $d'(\hat y', \hat y) < \delta'_1$.
\end{itemize}

We claim that mapping $Y'= B'(y, 2\delta'_1)\cap Y$ to $\hat y'\in Y$ is an $\tilde r$-crushing on $Y$. Fix $y'\in Y'$ and any positive integer $i$. The claim will be proved if we show that for each $x'\in Y$ with $d'(x',y')< r_i$ we have $d'(x',\hat y')< r_i$. We proceed as follows:
\begin{enumerate}
 \item Choose $x\in F$ with $d'(x',x)<2\delta'_1$ and note that
 $$
 d_{eucl}(y,x)\leq d'(y,y')+d(x',y') + d'(x',x) < r_i + 4 \delta'_1.
 $$
 \item Choose $z\in B(0,r')$ on the line segment from $x$ to $y$ with $d_{eucl}(y,z)< r_i$ and $d_{eucl}(x,z)< 4 \delta'_1$. By the structural fact (3) in the proof of Lemma \ref{lemmaLatschev} we have $d_{eucl}(\hat y,z)< r_i - \delta_1(r_\infty,\alpha)$.
 \item Combining these two observation and $\delta'_1=\delta_1(r_\infty, \alpha)/\K$ we obtain
 $$
 d'(x',\hat y') \leq d'(x',x)+ d_{eucl}(x,z)+ d_{eucl}(z,\hat y) + d'(\hat y, \hat y')< r_i -\delta_1 + \K \delta'_1<r_i.
 $$
\end{enumerate}
Hence the mentioned map is an $\tilde r$-chushing. Note that $d(y, \hat y) \geq \delta_1$ by a construction in the proof of Lemma \ref{lemmaLatschev}. On the other hand, 
$$
d'(y,\hat y') \geq d_{eucl}(y, \hat y) - d'(\hat y, \hat y') \geq 6 \delta' 
 - \delta'_1 = 5 \delta'_1 > 2 \delta'_1 
$$ 
which means $\hat y' \notin Y'$. The mentioned $\tilde r$-crushing thus maps $Y \to Y \setminus Y'$, paving way for the inductive constrution.

We remove $y$ from $F$ and proceed by induction on the obtained finite set $F \setminus \{y\}$ as in Lemma \ref{lemmaLatschev} by iteratively choosing points $y$ at maximal distance from $0$. 
At each inductive step we make a few choices:
\begin{itemize}
 \item We choose $\hat y\in D$ and $\hat y' \in Y$. At this point the point $\hat y'$ should not have been removed from $Y$ yet by the inductive process. This is indeed the case as:
\begin{itemize}
 \item  Each removed point $q'\in Y$ is at distance less than $2 \delta'_1$ from some $q\in F$ with $d_{eucl}(q,0) \geq r'$ and thus satisfies $d'(q',0) \geq r'-2\delta'_1$.
 \item On the other hand, structural fact (2) in the proof of Lemma \ref{lemmaLatschev} implies 
 $$
 d'(\hat y', 0) \leq d_{eucl}(0,\hat y) + d'(\hat y, \hat y') \leq r' - \delta_1 + 2\delta'_1 = r' -5 \delta'_1.
 $$
 \end{itemize}
 \item We choose $x \in F \cap \overline{B(0,r')}$. If the only point of $F$ at distance less than $2 \delta'_1$ from $x'$ was at distance more than $r'$, then $x'$ would already have been removed in the inductive process and wouldn't be involved in the $\tilde r$-crushing at this step.
\end{itemize}

It remains to discuss the termination of the inductive process. Let $0'\in Y$ be a point with $d'(0,0')<\delta'_1$. The process terminates when $r'$ falls below $r_\infty/\sqrt{2}$ and the $d'$-balls of radius $2\delta'_1$ around the remaining points of $F$ form a set contained in $B\left(0,r_\infty/\sqrt{2} + 2\delta'_1\right) \subseteq B'\left(0',r_\infty/\sqrt{2} + 3\delta'_1\right)$. Similarly as in Lemma \ref{lemmaLatschev} we want to prove that this set is contained in $B'\left(0',r_\infty/\sqrt{2} + 3\delta'_1\right)$ as this would imply that the final step can be defined as an $\tilde r$-crushing taking the remaining part of $Y$ into $0'$. In technical terms we need to prove 
\begin{equation}
\label{EqFinal}
r_\infty/\sqrt{2} + 3\delta'_1 \leq r_\infty.  
\end{equation}
Taking into account that (as $\alpha \geq r_\infty$ and )
$$
\delta'_1= \delta_1(r_\infty,\alpha)/7 \leq \delta_1(r_\infty,r_\infty)/7= \frac{r_\infty}{7} \cdot \left( 1-\sqrt{1-\frac{1}{4}}\right)
$$
we can directly verify that
$$
\frac{1}{\sqrt{2}}+ \frac{3}{7} \cdot \left( 1-\sqrt{1-\frac{1}{4}}\right)< 1
$$ 
and thus inequality \ref{EqFinal} holds. 
\end{proof}

%%%%%%%%%%%%%%%%%%%%%%
\section{Underlying covers}
\label{SubsCovers}

In this section we define convenient covers $\mathcal{C}$ of $X$ and $\mathcal{\overline{C}}$ of $Y$. Fix a closed Riemanninan manifold $X$ and let $\alpha\in (0,  \rho/2)$. Let $A\subset X$ be finite $\frac{\alpha}{2}$-dense in $X$. Let 
$$
\mathcal{C}=\{B(x,\alpha)\ |\ x\in A\}
$$ 
be a finite cover of $X$. By making $\alpha$ smaller we can assume (see Lemma \ref{lemmaMu}) that:
\begin{itemize}
\item the minimal distance between pairs of disjoint balls in $\mathcal {C}$ is positive, and  
\item that the intersection properties of  $\mathcal{C}$ are preserved by small thickenings.
\end{itemize}

\begin{lemma}\label{lemmaMu}
For each finite subset $A \subset X$ there exists a finite set of scales $R_A\subset (0,\infty)$ such that for each $\alpha \in (0,\infty)\setminus R_A$ there exists $\mu>0$ satisfying the following condition for each $\sigma \subseteq A$: $\displaystyle\bigcap_{x\in \sigma} B(x,\alpha)=\emptyset$ iff $\displaystyle\bigcap_{x\in \sigma} B(x,\alpha+\mu)=\emptyset$.
\end{lemma}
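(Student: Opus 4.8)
The plan is to reduce the set-intersection condition for each subset $\sigma$ to a single real threshold, and then exploit that $A$ gives only finitely many such thresholds. For a nonempty finite $\sigma \subseteq A$ define the Chebyshev-type radius
$$
t(\sigma) = \min_{y \in X}\ \max_{x \in \sigma} d(x,y),
$$
where the minimum is attained because $y \mapsto \max_{x\in\sigma} d(x,y)$ is continuous and $X$, being a closed Riemannian manifold, is compact. The first step is to verify the equivalence
$$
\bigcap_{x\in\sigma} B(x,\alpha) \neq \emptyset \iff t(\sigma) < \alpha.
$$
Indeed, a point $y$ lies in the common intersection precisely when $\max_{x\in\sigma} d(x,y) < \alpha$; since the minimum over $y$ is attained, such a $y$ exists if and only if the minimal value $t(\sigma)$ is itself $<\alpha$. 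The use of open balls together with the \emph{attained} minimum is exactly what turns the ``there exists $y$ with strict inequalities'' statement into the clean strict inequality $t(\sigma)<\alpha$ with no boundary ambiguity. (The cases $|\sigma|\le 1$ are trivial, since then $\bigcap_{x\in\sigma}B(x,\alpha)$ is nonempty for every $\alpha>0$.)

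Next I would set $R_A = \{\,t(\sigma) : \emptyset\neq\sigma \subseteq A\,\}\cap(0,\infty)$. Since $A$ is finite there are only $2^{|A|}$ subsets, so $R_A$ is a finite subset of $(0,\infty)$. Fix any $\alpha \in (0,\infty)\setminus R_A$. Then $\alpha \neq t(\sigma)$ for every $\sigma$, so by the equivalence above the (non)emptiness of $\bigcap_{x\in\sigma}B(x,\alpha)$ is decided by whether $\alpha$ lies strictly below or strictly above $t(\sigma)$.

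I would then choose
$$
\mu = \tfrac12 \min\{\, t(\sigma) - \alpha : \emptyset\neq\sigma\subseteq A,\ t(\sigma) > \alpha \,\} > 0,
$$
taking $\mu$ to be any positive number if no threshold exceeds $\alpha$. By construction $\alpha + \mu$ lies on the same side of every threshold $t(\sigma)$ as $\alpha$ does: a threshold below $\alpha$ stays below $\alpha + \mu$, and a threshold above $\alpha$ satisfies $\alpha+\mu < t(\sigma)$. Applying the equivalence once more at radius $\alpha + \mu$ then gives, for every $\sigma\subseteq A$, that $\bigcap_{x\in\sigma}B(x,\alpha)=\emptyset$ iff $\bigcap_{x\in\sigma}B(x,\alpha+\mu)=\emptyset$, which is the claim.

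The step requiring the most care is the threshold characterization: it is essential that $X$ is compact so that the minimum defining $t(\sigma)$ is attained, since this is what converts nonemptiness into the exact strict inequality $t(\sigma)<\alpha$ and lets a single thickening avoid all threshold crossings. Once this is in place, the finiteness of the family of subsets of $A$ makes the uniform choice of $\mu$ immediate, and no genuine obstacle remains.
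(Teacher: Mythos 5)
Your proposal is correct and follows essentially the same route as the paper: the paper defines, for each $\sigma \subseteq A$, the threshold $p_\sigma = \arginf_{r>0}\{\bigcap_{x\in\sigma}B(x,r)\neq\emptyset\}$, which coincides with your Chebyshev radius $t(\sigma)$, collects these finitely many values into $R_A$, and chooses $\mu$ so that $[\alpha,\alpha+\mu]$ contains no threshold. Your only addition is the explicit $\min_y \max_x$ characterization with the compactness/attainment discussion, which is harmless but not actually needed (the infimum formulation already gives the strict-inequality equivalence).
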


\begin{proof}
For each $\sigma\subseteq A$ we define 
$$p_\sigma=\arginf_{r>0}\left\{\bigcap_{x\in \sigma}B(x,r)\neq\emptyset\right \}.$$
Define a finite set $R_A=\{p_\sigma\}_{\sigma\subseteq A}$. Let $p'$ be the smallest number from $R_A$ larger than $\alpha$ (if such a number does not exists we can define $\mu>0$ as any number). Any $\mu<(p'-\alpha)$ satisfies the conclusion of the lemma: if $\displaystyle\bigcap_{x\in \sigma'} B(x,\alpha)=\emptyset$ and $\displaystyle\bigcap_{x\in \sigma'} B(x,\alpha+\mu) \neq \emptyset$ was true for some $\sigma'$, then $[\alpha, \alpha + \mu]$ would contain $p_{\sigma'}$, which is a contradiction with the definition of $\mu$ and $\alpha$.
\end{proof}

Next, let $(Y,d)$ be a metric space whose Gromov-Hausdorff distance to $X$ is less than $\delta>0$. As above we define a pseudo-metric $d'$ on $Y\sqcup X$ as an extension of metrics on individual spaces. If $\delta<\frac{\alpha}{2}$ then from construction $A$ follows that  $d'$-balls $B'$ of radius $\alpha$ and center in $A$ cover $Y\sqcup X$. Note that for each $x\in A$, $B(x,\alpha)=B'(x,\alpha)\cap X$. We denote 
$$\mathcal{\overline{C}}=\{ B'(x,\alpha)\cap Y\ |\ x\in A\}.$$

The following lemma explains that by making $\alpha$ smaller, the corresponding intersections induced by covers $\mathcal{C}$ and  
$\mathcal{\overline{C}}$ are close in the Gromov-Hausdorff distance if $X$ and $Y$ are sufficiently close.

\begin{lemma}\label{LatschevTech}
Let $A \subset X$ be an $s$-dense subset for some $s>0$ and chose  $\delta_2 > 0$. Then $\forall \alpha' > s \ \exists \ell \in (0,\alpha'-s)$ such that $\forall \alpha \in (\alpha'-\ell, \alpha') \ \exists \delta'_2=\delta'_2(\delta_2, \alpha)$ such that the following implication holds:
$$
d_{G,H}(X,Y) < \delta'_2 \implies \forall \sigma \subseteq A: 
d_H \left(\bigcap_{z\in \sigma} B(z,\alpha),\bigcap_{z\in \sigma} B'(z,\alpha)\cap Y \right) < \delta_2.
$$
In particular, for each $\sigma \subseteq A$ we have 
$\displaystyle\bigcap_{z\in \sigma} B(z,\alpha)=\emptyset$ iff $\displaystyle\bigcap_{z\in \sigma} B'(z,\alpha)\cap Y=\emptyset$.
\end{lemma}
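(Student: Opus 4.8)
The plan is to treat each subset $\sigma \subseteq A$ separately; since $A$ is finite there are only finitely many such $\sigma$, and the final $\ell$ and $\delta'_2$ are obtained by taking the minimum over all $\sigma$. For a fixed $\sigma$ I would encode both intersections as sublevel sets of one function: working in the pseudo-metric $d'$ on $Y\sqcup X$, set $f_\sigma=\max_{z\in\sigma}d'(z,\cdot)$, which is $1$-Lipschitz and satisfies $\bigcap_{z\in\sigma}B(z,\alpha)=\{f_\sigma<\alpha\}\cap X$ and $\bigcap_{z\in\sigma}B'(z,\alpha)\cap Y=\{f_\sigma<\alpha\}\cap Y$. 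I would dispatch the ``in particular'' clause immediately: the Hausdorff distance between a nonempty set and the empty set is infinite, so $d_H<\delta_2<\infty$ automatically forces the two intersections to be simultaneously empty or nonempty.

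The first genuine step is to fix $\ell$ using genericity. Adopting the notation of Lemma \ref{lemmaMu}, let $p_\sigma=\arginf_{r>0}\{\bigcap_{z\in\sigma}B(z,r)\neq\emptyset\}$ be the global minimum value of $f_\sigma$ on $X$, and recall that $R_A=\{p_\sigma\}_\sigma$ is finite. I would choose $\ell\in(0,\alpha'-s)$ small enough that the open interval $(\alpha'-\ell,\alpha')$ is disjoint from $R_A$; this is possible since $R_A$ is finite, by taking $\ell$ below the gap between $\alpha'$ and the largest element of $R_A$ lying strictly below $\alpha'$. Then for every $\alpha$ in this interval and every $\sigma$, the level $\alpha$ is bounded away from $p_\sigma$, and exactly one of two cases occurs. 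If $\alpha<p_\sigma$, then $\bigcap_zB(z,\alpha)=\emptyset$, and once $\delta'_2<p_\sigma-\alpha$ the $Y$-intersection is empty too: a point $y$ in it has $f_\sigma(y)<\alpha$, so a $d'$-nearby $x\in X$ (which exists because $d_{GH}(X,Y)<\delta'_2$ makes $X$ and $Y$ mutually $\delta'_2$-dense in $d'$) would satisfy $f_\sigma(x)<\alpha+\delta'_2<p_\sigma$, contradicting minimality of $p_\sigma$. Both sets empty gives Hausdorff distance $0$.

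The case $\alpha>p_\sigma$ is the crux. Here $\{f_\sigma\le\alpha\}\cap X$ contains the ball $B(m,\alpha-p_\sigma)$ around a minimizer $m$, so it is full-dimensional. The key input I would invoke is continuity of sublevel sets in the level: since $X$ is compact and $f_\sigma$ continuous, the closed sublevel sets $K_t=\{f_\sigma\le t\}\cap X$ satisfy $d_H(K_{t'},K_t)\to0$ as $t'\to t$ (a monotone family of compacta converges in Hausdorff distance to its limit set, proved by extracting a convergent subsequence of putative far-away points). Moreover, for $\alpha$ below the star radius $\rho$ the balls $B(z,\alpha)$ are geodesically convex, so $f_\sigma$ is quasiconvex and $K_\alpha$ is the closure of the open sublevel set $\{f_\sigma<\alpha\}\cap X$; hence working with $K_\alpha$ computes the same Hausdorff distances as the open intersections. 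With this in hand I would prove both Hausdorff inclusions by combining a single $d'$-crossing between $X$ and $Y$ (size $<\delta'_2$) with a single level-step. For $y$ in the $Y$-intersection, pass to a nearby $x\in X$ at level $<\alpha+\delta'_2$ and use that $d_H(K_{\alpha+\delta'_2},K_\alpha)$ is small to land near $\{f_\sigma<\alpha\}\cap X$; conversely, for $x$ in $\{f_\sigma<\alpha\}\cap X$ first use that $d_H(K_{\alpha-\delta'_2},K_\alpha)$ is small to drop to a point at level $<\alpha-\delta'_2$, then cross to a $\delta'_2$-near point of $Y$, which then has level $<\alpha$ and so lies in the $Y$-intersection. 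Choosing $\delta'_2=\delta'_2(\delta_2,\alpha)$ small enough that the level-continuity moduli at $\alpha\pm\delta'_2$ fall below $\delta_2/2$, together with $\delta'_2<\delta_2/4$, closes both estimates.

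The step I expect to be the main obstacle is exactly this level-continuity when $\alpha>p_\sigma$, and within it the need to rule out that the open and closed sublevel sets differ at level $\alpha$, which would occur at an isolated local-minimum value of $f_\sigma$ not recorded in $R_A$. This is where geodesic convexity from the star radius is essential: it forces $\{f_\sigma\le\alpha\}$ to be convex with nonempty interior, hence equal to the closure of $\{f_\sigma<\alpha\}$, eliminating spurious plateaus. Everything else — the reduction over the finitely many $\sigma$, the choice of $\ell$, and the empty case — is bookkeeping, and the two Hausdorff directions are symmetric once the continuity statement is secured.
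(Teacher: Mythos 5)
Your strategy (genericity in the level $\alpha$ plus Hausdorff continuity of sublevel sets of $f_\sigma=\max_{z\in\sigma}d'(z,\cdot)$) is genuinely different from the paper's, but the step you yourself flag as the crux has a real gap: the inference ``$\{f_\sigma\le\alpha\}$ is (geodesically) convex with nonempty interior, hence equals $\overline{\{f_\sigma<\alpha\}}$'' is not valid. Convexity of sublevel sets is just quasiconvexity of $f_\sigma$, and a quasiconvex function can have a flat piece at a non-minimal level: take $f(x,y)=\phi(x)$ on $\mathbb{R}^2$ with $\phi$ unimodal, $\phi\equiv\alpha$ on $[0,1]$, $\phi<\alpha$ for $x>1$ and $\phi>\alpha$ for $x<0$; then $\{f\le\alpha\}=\{x\ge 0\}$ is convex with nonempty interior, yet $\overline{\{f<\alpha\}}=\{x\ge 1\}$ is strictly smaller, and the global minimum value (your $p_\sigma$) is well below $\alpha$, so your genericity choice of $\ell$ via $R_A$ does not exclude this configuration: $R_A$ records only the \emph{global} minimum levels, while the danger is a \emph{local} minimum (or ridge) of $f_\sigma$ at level exactly $\alpha$. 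What convexity of the set actually yields is $K_\alpha=\overline{\mathrm{int}(K_\alpha)}$, and $\mathrm{int}(K_\alpha)$ may contain points where $f_\sigma=\alpha$ that are far from $\{f_\sigma<\alpha\}$. To close the gap you need \emph{convexity of the function} $t\mapsto d(z,\gamma(t))$ along geodesics (so that $f_\sigma(\gamma(t))\le(1-t)f_\sigma(x)+t f_\sigma(m)<\alpha$ for $t>0$, $m$ a minimizer), i.e.\ a convexity-radius argument, or a separate ``no open plateau'' argument for maxima of distance functions. Neither follows from the star-radius properties as stated in the paper: property (2) gives only quasiconvexity, and property (3) lacks the contraction factor $t$ that a chord inequality would require. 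Without that input, your convergence $d_H(K_{t'},K_\alpha)\to0$ as $t'\uparrow\alpha$ can fail, and then both Hausdorff inclusions break (a point of $Y$ near a level-$\alpha$ local minimum can undershoot into the $Y$-intersection while being far from the $X$-intersection).

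There is also a scope mismatch: your argument needs $\alpha$ below (half) the star radius, resp.\ the convexity radius, while the lemma is stated for every $\alpha'>s$ with no upper bound; this is harmless for the paper's application (where $\alpha<\rho/2$) but means your proof does not cover the statement as written. For contrast, the paper's proof is purely metric and avoids the issue entirely: it fixes a finite set $F\subset X$ whose trace $F_\sigma$ is $\delta_2/2$-dense in each $\alpha'$-intersection, and defines the leverage $\ell_\sigma=\alpha'-\max\{d(z,y)\mid z\in\sigma,\ y\in F_\sigma\}$, so that for every $\alpha\in(\alpha'-\ell,\alpha')$ all of $F_\sigma$ lies inside the $\alpha$-intersection; both Hausdorff estimates then follow from two triangle inequalities, with $\delta'_2<\min\{\delta_2/2,\ \alpha-(\alpha'-\ell),\ \alpha'-\alpha\}$. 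This choice of $\ell$ automatically forecloses the plateau/local-minimum scenario you have to fight (every point of the $\alpha'$-sublevel set is within $\delta_2/2$ of a point at level at most $\alpha'-\ell<\alpha$), works for any compact metric space $X$, and needs no Riemannian input at all.
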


\begin{proof}
 Choose a finite $F \subset X$ such that for each $\sigma \subseteq A$ the set 
 $$
 F_\sigma=F \cap \bigcap_{z\in \sigma} B(z,\alpha')
 $$
 is $\delta_2/2$-dense in $\displaystyle\bigcap_{z\in \sigma} B(z,\alpha')$.
 For each $\sigma \subseteq A$ define 
 $\ell_\alpha = \alpha' - \max\{ d(z,y) \mid z\in \sigma, y\in F_\sigma\}$ and let $\ell = \min_{\sigma \subseteq A} \ell_\sigma>0$ denote the leverage. Choose $\alpha \in (\alpha'-\ell, \alpha')$.
For each $\sigma \subseteq A$ define 
 $$
H_\alpha= \bigcap_{z\in \sigma} B(z,\alpha) 
\quad \textit{ and } \quad
H'_\alpha= \bigcap_{z\in \sigma} B'(z,\alpha).
 $$
 Observe that $F_\sigma = F \cap H_\sigma$ by the definition of $\ell$ and that $F_\sigma$ is $\delta_2/2$-dense in $H_\sigma$, hence $d_{H}(F_\sigma, H_\sigma) <\delta_2/2$. Choose $\delta'_2 < \min\{\delta_2/2, \alpha-(\alpha'-\ell), \alpha'-\alpha\}$. We are now in a position to prove $d_H(H_\sigma, H'_\sigma)< \delta_2$. Let us fix $\sigma \subseteq A$:
 
\begin{description}
 \item[Case 1] Let $x\in H_\sigma$. We want to prove there exists $p'_x\in H'_\sigma$ with $d'(x,p'_x)< \delta_2$. We know there exist:
	\begin{itemize}
 		\item $p_x\in F_\sigma : d(x,p_x) < \delta_2/2$;
		\item $p'_x\in Y : d'(p_x,p'_x)< \delta'_2<\delta_2/2$.
	\end{itemize}
	Thus $d'(x,p'_x)< \delta_2$ and it remains to prove that $p'_x\in H'_\sigma$. For each $z \in \sigma$ we have
	$$
	d'(z,p'_x) \leq d'(z,p_x) + d'(p_x,p'_x) \leq (\alpha'-\ell) + (\alpha-(\alpha'-\ell))=\alpha
	$$
	and thus $p'_x\in H'_\sigma$.
\item [Case 2] Let $y\in H'_\sigma$. We want to prove there exists $p_y\in H_\sigma$ with $d'(y,p_y)< \delta_2$.  We know there exists $x_y\in X: d'(x_y,y)< \delta'_2$ and containment $\displaystyle x_y \in \bigcap_{z\in \sigma} B'(z,\alpha')$ follows from the following inequality, which holds for each $z\in \sigma$:
$$
d'(z,x_y) \leq d'(z,y)+d'(y,x_y) < \alpha + \delta'_2
 < \alpha + (\alpha' - \alpha)=\alpha'.
 $$
 There also exists $p_y\in F_\sigma \subset H_\sigma: d(x_y,p_y)< \delta_2/2$.
The following inequality completes the proof:
$$
d'(y,p_y) \leq d'(y,x_y) + d'(x_y,p_y) < \delta'_2 + \delta_2/2 < \delta_2/2 + \delta_2/2 = \delta_2.
$$
\end{description}
\end{proof}

%%%%%%%%%%%%%%%%%%%%%%
\subsection{Choosing the scale}
\label{SubsScale}

Similarly as in \cite{Latschev} we could scale our chosen $X$ by a large constant so that for each $x\in X$, any subset of the ball $B(x,1)$ was at Gromov-Hausdorff distance less than $\frac{1}{2}\delta'_1\Big((4M)^{-1}, 1\Big)$ from the subset of the flat Euclidean ball of radius $1$ corresponding to it under the exponential map at $x$. Instead we refrain from scaling and choose $\alpha>0$ so small that for each $x\in X$, any subset $B(x,\alpha)$ is at Gromov-Hausdorff distance less than  $\alpha \cdot \frac{1}{2}\delta'_1\Big((4M)^{-1}, 1\Big)$ from the subset of the flat Euclidean ball of radius $\alpha$ corresponding to it under the exponential map at $x$. We observe that
$$
\frac{1}{2}\alpha \cdot \delta'_1\left((4M)^{-1}, 1\right)=
\frac{1}{2}\cdot \frac{1}{\K}\cdot \frac{\alpha}{4M} \left( 1-\sqrt{1-\frac{\left(\frac{\alpha}{4M}\right)^2}{4\alpha^2}}\right)=
\frac{1}{2}\delta'_1\left(\frac{\alpha}{4M}, \alpha\right)
$$
and fix $\delta'_1=\delta'_1\left(\frac{\alpha}{4M}, \alpha\right)$.

%%%%%%%%%%%%%%%%%%%%%%
\section{The main argument}

We are now in position to prove our main result, Theorem \ref{Latschev}. The strategy of the proof is to define cover
$$
\mathcal{W}=\{\sRips (\overline{C}; \tilde{r})\ |\ \overline{C}\in \mathcal{\overline{C}}\}
$$
(see (9) in the proof below) and establish the following relationship:

$$X\stackrel{(i)}{\simeq} \Nerve(\mathcal{C})\stackrel{(ii)}{\cong} \Nerve(\mathcal {\overline{C}})\stackrel{(iii)}{\cong} \Nerve(\mathcal {W})\stackrel{(iv)}{\simeq} \sRips (Y; \tilde{r} )$$

\begin{proof}[Proof of Theorem \ref{Latschev}]
Let $X$ be a closed Riemannian manifold. Recall $\rho>0$.
\begin{enumerate}
\item Choose scale $\alpha \in (0, \rho/2)$ and the corresponding $\delta'_1$ according to Subsection \ref{SubsScale}.
\item Choose a finite $\frac{\alpha}{2}$-dense subset $A\subset X$.
 \item Use Lemma \ref{lemmaMu} and Lemma \ref{LatschevTech} to potentially decrease $\alpha$ (and $\delta'_1$) so that:
\begin{enumerate}
\item $A$ is still $\frac{\alpha}{2}$-dense in $X$,
 \item by Lemma \ref{lemmaMu} there exists $\mu>0$ satisfying the following condition for each  $\sigma \subseteq A$:
 $$
 \displaystyle\bigcap_{x\in \sigma} B(x,\alpha)=\emptyset
\quad  \textit{    iff    }  \quad
 \displaystyle\bigcap_{x\in \sigma} B(x,\alpha+\mu)=\emptyset.
 $$
 \item by Lemma \ref{LatschevTech} there exists $\delta'_2=\delta'_2\left(\frac{\delta'_1}{2}, \alpha \right )$ such that 
 $$
d_{G,H}(X,Y) < \delta'_2 \implies \forall \sigma \subseteq A: 
d_H \left(\bigcap_{z\in \sigma} B(z,\alpha),\bigcap_{z\in \sigma} B'(z,\alpha)\cap Y \right) < \frac{\delta'_1}{2}.
$$
 \end{enumerate}

\item Cover $$\displaystyle\mathcal {C}=\{ B(x,\alpha)\ |\ x\in A\}$$  is a good open cover of $X$ (see \cite[Lemma 3.5]{LemezVirk}) as $\alpha < \rho/2$.
\item The Nerve Theorem (Theorem \ref{ThmNerve}) implies $X \simeq \Nerve(\mathcal{C}) $ establishing \textbf{(i)}.
\vskip.2cm

\item Set $\delta=\min\left\{\delta'_2,\frac{\alpha}{8}, \frac{\mu}{2} \right\}$.
\item Chose a metric space $Y$ with Gromov-Hausdorff distance to $X$ less than $\delta$.
\item Let $\displaystyle\mathcal{C'}=\{ B'(x,\alpha)\ |\ x\in A\}$ be an open cover of $Y\sqcup X$ as mentioned in Section \ref{SubsCovers}.
\item Define 
$$
\displaystyle\mathcal{\overline{C}}=\{ B'(x,\alpha)\cap Y\ |\ x\in A\}.
$$
\item\label{trd11} Below we prove \label{trd1}$\Nerve(\mathcal {C})\cong \Nerve(\mathcal {\overline{C}})$  establishing \textbf{(ii)}.
\vskip.2cm

\item Define $\varepsilon_0= \alpha/4$ and let $\varepsilon_0 > \tilde r> \varepsilon_0 /M$.
\item Let $\mathcal{W}=\{\sRips (\overline{C}; \tilde{r})\ |\ \overline{C}\in \mathcal{\overline{C}}\}$.
\item It is easy to see that the simplicial map $\varphi_2: \Nerve(\mathcal {\overline{C}})\to \Nerve(\mathcal {W})$ defined by  
$$
B'(x_i,\alpha)\cap Y\mapsto \sRips(B'(x_i,\alpha)\cap Y,\tilde{r})
$$
 is an isomorphism (see \cite[Proposition 3.10]{LemezVirk} for an argument), 
establishing \textbf{(iii)}.
\vskip.2cm

\item It remain to show that $\mathcal {W}$ is a good cover of $\sRips(Y;\tilde{r})$ as small thickenings to a good open cover with isomorphic nerve (for more details see \cite[Remark 3.8]{LemezVirk} and subsequent results) would allow us to use the Nerve Theorem to conclude \textbf{(iv)} and thus complete the first part of the proof. Let us prove that $\mathcal {W}$ is a good cover of $\sRips(Y;\tilde{r})$:
\begin{enumerate}
 \item As $A$ is $\alpha/2$-dense, cover $\mathcal{C}$ is of Lebesgue number at least $\alpha/2$. As $\delta< \frac{\alpha}{8}$, cover $\mathcal{C'}$ is of Lebesgue number at least $\alpha/4=\varepsilon_0 > \tilde{r}$. Thus each simplex of $\sRips(Y;\tilde{r})$ is contained in an element of $\overline{\mathcal{C}}$ and hence $\mathcal {W}$ is a cover of $\sRips(Y;\tilde{r})$.
 \item Assume 
 $$
 L=\sRips\Big(B'(x_1,\alpha)\cap Y; \tilde r\Big) \ \cap \ \ldots \ \cap \ \sRips\Big(B'(x_q,\alpha)\cap Y; \tilde r\Big)= 
 $$
 $$
 =\sRips\Big(B'(x_1,\alpha)\cap \ldots \cap B'(x_q,\alpha) \cap Y; \tilde r\Big)
 $$
 is non-empty for some $x_i\in A$. We proceed as follows:
	\begin{itemize}
	 \item As $\delta < \delta'_2 (\delta'_1/2)$, Lemma \ref{LatschevTech} implies that 
	 $$
	 S_1=B'(x_1,\alpha)\cap \ldots \cap B'(x_q,\alpha) \cap Y
	 $$
	  is at Gromov-Hausdorff distance at most $\delta'_1/2$ from 
	  $$
	 S_2=B(x_1,\alpha)\cap \ldots \cap B(x_q,\alpha).
	 $$
	 \item Condition described in subsection \ref{SubsScale} implies that $S_2$ is at Gromov-Hausdorff distance less than $\delta'_1/2$ from the subset $S_3$ of the flat Euclidean ball of radius $\alpha$ corresponding to it under the exponential map based at any point of $S$. 
	 \item As $S_2$ is convex, $S_3$ is star-shaped by the definition of the exponential map and thus $\tilde{r}$-crushable by Lemma \ref{lemmaLatschev}.
	 \item Now $S_1$ is at Gromov-Hausdorff distance less than $\delta'_1$ from $S_3$ and thus by Lemma \ref{lemmaLatschev2} it is $\tilde{r}$-crushable.
	 \item Complex $L$ is contractible by Corollary \ref{CorollaryCrushable} and $\mathcal {W}$ is thus a good cover.
	\end{itemize}
\end{enumerate}
\end{enumerate}

We conclude the first part by providing the omitted argument of (\ref{trd11}). Let $\varphi_1: \Nerve(\mathcal {C})\cong \Nerve(\mathcal {\overline{C}})$ be a simplicial map, defined by:
$$B(x,\alpha)\mapsto B'(x,\alpha)\cap Y.$$
Let $\sigma=\left\{B(x_0,\alpha),\ldots,B(x_q,\alpha)\right \}$ be a simplex in $\Nerve(\mathcal{C})$. We map such a simplex to $\left\{B'(x_1,\alpha)\cap Y,\ldots,B'(x_q,\alpha)\cap Y\right \}$ in $\Nerve(\mathcal{{\overline{C}}})$. Map $\varphi_1$ is:
\begin{itemize}
 \item bijective on vertices by construction;
 \item  well defined by Lemma \ref{LatschevTech} as $\delta < \delta'_2$;
 \item surjective on simplices, which follows from Lemma \ref{lemmaMu}: if there exists  $$
 z' \in B'(x_1,\alpha)\cap \ldots \cap B'(x_q,\alpha)\cap Y
 $$
then there exists $z\in X$ with $ d'(z,z')< \delta < \mu/2$ and thus 
 $$
 z \in B(x_1,\alpha+ \mu)\cap \ldots \cap B(x_q,\alpha+\mu),
 $$
 which implies 
 $$
 z \in B(x_1,\alpha)\cap \ldots \cap B(x_q,\alpha),
 $$
 by Lemma \ref{lemmaMu}.
\end{itemize}
Thus $\varphi_1$ is an isomorphism.
 
 The second part concerning functoriality follows from the functorial nerve theorem (Theorem \ref{ThmNerve}) for the smaller sequence of scales generates covers $\mathcal{W}_{\tilde{r}}$ and $\mathcal{W}_{\tilde{s}}$. The following diagram commutes up to homotopy:
 $$
 \xymatrix{
 X \ar[r]^{\simeq \qquad }&  \Nerve(\mathcal{C}) \ar@{<->}[r]^{\cong}&  \Nerve(\widehat{\mathcal{C}}) \ar@{<->}[r]^{\cong}& \Nerve(\mathcal{W}_{\tilde{r}}) &  \sRips\left (Y; \tilde r\right)  \ar[l]_{\simeq  }\\
 X \ar[r]^{\simeq \qquad } \ar[u]_{\textrm{id}}& 
  \Nerve(\mathcal{C})  \ar@{<->}[r]^{\cong} \ar[u]_{\textrm{id}}& 
  \Nerve(\widehat{\mathcal{C}})  \ar@{<->}[r]^{\cong} \ar[u]_{\textrm{id}}& \Nerve(\mathcal{W}_{\tilde{s}})  \ar[u]&  
  \sRips\left (Y; \tilde s\right) \ar@{^(->}[u] \ar[l]_{\simeq  }
}
 $$
\end{proof}

\bibliographystyle{model1-num-names}

\begin{thebibliography}{9}

\bibitem{AA}
M.\ Adamaszek and H.\ Adams:
\emph{The Rips complexes of a circle}, 
Pacific Journal of Mathematics 290-1 (2017), 1--40. 

\bibitem{Ad5}
M. Adamaszek, H. Adams, and S. Reddy:
\emph{On Rips complexes of ellipses}, 
Journal of Topology and Analysis 11 (2019), 661--690.

\bibitem{ACos}
H.\ Adams and B.\ Coskunuzer:
\emph{Geometric Approaches on Persistent Homology}, 
To appear in SIAM Journal on Applied Algebra and Geometry (2023).

\bibitem{Att}D. Attali, A. Lieutier, and D. Salinas, \emph{ Rips complexes also provide topologically correct reconstructions of sampled shapes}, In Proceedings of the 27th annual ACM symposium on Computational geometry, SoCG '11, pages 491--500, New York, NY, USA, 2011, ACM.

\bibitem{bauer2019ripser}
Ulrich Bauer,
\emph{Ripser: efficient computation of Rips persistence barcodes},
Journal of Applied and Computational Topology, 5:391--423,
  2021.

\bibitem{Comb}M.\ Cencelj, J.\ Dydak, A.\ Vavpeti\v c, and \v Z.\ Virk,
\emph{A combinatorial approach to coarse geometry},
Topology and its Applications 159(2012), 646--658.

\bibitem{Chazal}F.\ Chazal, A.\ Lieutier, \emph{Smooth manifold reconstruction from noisy and non-uniform approximation with guarantees}, Computational Geometry, 40(2008), 156--170.

\bibitem{Dey}T.\ K.\ Dey, \emph{Curve and surface reconstruction: Algorithms with mathematical analysis}, Cambridge University Press, 23 (2007).

\bibitem{EH}
H.\ Edelsbrunner and J.\ L.\ Harer,
\emph{Computational Topology. An Introduction},
 Amer. Math. Soc., Providence, Rhode Island, 2010.
 
 \bibitem{Feng}
Ziqin Feng and Naga Chandra Padmini Nukala, 
\emph{On Rips complexes of Finite Metric Spaces with Scale  2}, 
arXiv:2302.14664.
 
 \bibitem{Gor}
  P. \ Goričan and Ž. \ Virk: 
\emph{Critical edges in Rips complexes and persistence}, 
	arXiv:2304.05185 [math.AT]. 

\bibitem{Gromov} M.\ Gromov, \emph{Hyperbolic groups}, Essays in group theory, Mathematical Sciences Research Institute Publications, 8, Springer-Verlag, pp. 75--263, 1987.

\bibitem{Guibas} L.\ J.\ Guibas, S.\ Y.\ Oudot, \emph{Reconstruction Using Witness Complexes}, Discrete Comput Geom., 40(2008), 325--356.

\bibitem{Hausmann}	J.\ C.\ Hausmann, {\em On the Rips complexes and a cohomology theory for metric
spaces}, Annals of Mathematics Studies, 138:175--188, 1995.

\bibitem{Jelenc}
B. \ Jelenc and Ž. \ Virk, 
\emph{Detecting geodesic circles in hyperbolic surfaces with persistent homology}, in preparation.

\bibitem{Latschev}	J.\ Latschev, {\em Rips complexes of metric spaces near a closed Riemannian manifold}, Arch. Math. 77, 522--528, 2001.

\bibitem{LemezVirk} B.\ Leme\v z, \v Z. Virk. {\em Reconstruction properties of selective Rips complexes}, Glasnik Matematicki 57(2022), vol. 2, 73--88.

\bibitem{Memoli}
S.\ Lim, F. Memoli and O.B. Okutan, 
\emph{Rips Persistent Homology, Injective Metric Spaces, and The Filling Radius}, 
arXiv:2001.07588.

\bibitem{W} P.\ Niyogi, S.\ Smale, and S.\ Weinberger. \emph{Finding the homology of submanifolds with high confidence from random samples.} Discrete Comput. Geom., 39:419--441, March 2008.

\bibitem{Saleh}
N. Saleh, T. Titz Mite, and S. Witzel,
\emph{Rips complexes of Platonic solids}, arXiv:2302.14388.

\bibitem{Shukla}
S. Shukla, 
\emph{On Vietoris–Rips complexes (with scale 3) of hypercube graphs}, 
arXiv:2202.02756, 2022.

\bibitem{Spanier} E.\ H.\ Spanier, {\em Algebraic Topology}, New York, 1966.

\bibitem{Viet}
L.\ Vietoris, 
\emph{\" Uber den h\" oheren Zusammenhang kompakter R\" aume und eine Klasse von zusammenhangstreuen
Abbildungen.} Math. Ann. 97, 454--472 (1927).

\bibitem{VirkRipsAsNerves} \v Z.\ Virk, {\em Rips Complexes as Nerves and a Functorial Dowker-Nerve Diagram}, Mediterr. J. Math. 18, 58 (2021).

\bibitem{Virk1-dim} \v Z.\ Virk, {\em 1-dimensional intrinsic persistence of geodesic spaces}, J. Topol. Anal. 12, 169--207 (2020).

\bibitem{VirkSRips1} \v Z.\ Virk, {\em Persistent Homology with Selective Rips complexes detects geodesic circles}, preprint, 	arXiv:2108.07460.

\bibitem{VirkSRips2} \v Z.\ Virk, {\em Detecting geodesic circles in hyperbolic surfaces with persistent homology}, preprint, https://zigavirk.gitlab.io/Select2.pdf.

\bibitem{ZV2}
\v Z.\ Virk, 
\emph{Footprints of geodesics in persistent homology}, Mediterranean Journal of Mathematics 19 (2022).

 \bibitem{ZVCounterex}
\v Z. Virk:
\emph{A Counter-Example to Hausmann's Conjecture}, Found Comput Math 22(2022), 469--475.

\end{thebibliography}

\end{document}